\definecolor{greenbean}{RGB}{199,237,204}
\definecolor{blue}{rgb}{0,0,1}
\definecolor{red}{rgb}{1,0,0}
\definecolor{green}{rgb}{0,1,0}
\definecolor{gray}{rgb}{.5,.5,.5}
\definecolor{yellow}{rgb}{1,1,.4}
\definecolor{purple}{rgb}{1,0,1}
\definecolor{gold}{rgb}{.5,.5,.2}
\theoremstyle{plain}
\newtheorem{theorem}{Theorem}[section]
\newtheorem{lemma}[theorem]{Lemma}
\newtheorem{prop}[theorem]{Proposition}
\newtheorem{cor}[theorem]{Corollary}
\newtheorem{conj}[theorem]{Conjecture}
\theoremstyle{definition}
\newtheorem{Def}[theorem]{Definition}
\theoremstyle{remark}
\newtheorem{rmk}[theorem]{Remark}
\newtheorem{ex}[theorem]{Example}
\newcommand{\mult}{\textup{mult}}
\newcommand{\ord}{\textup{ord}}
\newcommand{\vol}{\textup{Vol}}
\newcommand{\ceil}[1]{\lceil #1 \rceil}
\newcommand{\mc}[1]{\mathcal{#1}}
\newcommand{\mb}[1]{\mathbb{#1}}
\renewcommand{\d}{\textup{d}}
\renewcommand{\O}{\mc{O}}
\newcommand{\im}{\textup{Im}}
\newcommand{\lct}{\textup{lct}}
\newcommand{\df}{\textup{def}}
\newcommand{\hcell}[1]{\ifmeasuring@#1\else\omit$\displaystyle#1$\ignorespaces\fi}
\newcommand{\pushright}[1]{\ifmeasuring@#1\else\omit\hfill$\displaystyle#1$\fi\ignorespaces}
\newcommand{\pushleft}[1]{\ifmeasuring@#1\else\omit$\displaystyle#1$\hfill\fi\ignorespaces}
\title{Global Generation of Adjoint Line Bundles on Projective $5$-folds}
\author{Fei Ye} 
\address[Fei Ye]{Department of Mathematics and Computer Science,  QCC-CUNY, 222-05 56th Ave., Bayside, NY, 11364, USA}
\email{feye@qcc.cuny.edu}
\author{Zhixian Zhu}
\address[Zhixian Zhu]{ KIAS,
85 Hoegiro, Dongdaemun-gu,
Seoul 130-722,
Republic of Korea}
\email{zhixian@kias.re.kr}
\subjclass[2000]{Primary 14C20; Secondary 14F18, 14B05}
\date{}
\begin{document}

\maketitle

\begin{abstract}
Let $X$ be a smooth projective variety of dimension $5$ and $L$ be an ample line bundle on $X$ such that $L^5>7^5$ and $L^d\cdot Z\geq 7^d$ for any subvariety $Z$ of dimension $1\leq d\leq 4$. We show that $\O_X(K_X+L)$ is globally generated.

\end{abstract}

\section{Introduction}
Throughout this paper, we work over an algebraic closed field $k$ of characteristic zero. 

Geometric properties of pluricanonical and adjoint line bundles on surfaces and higher dimensional varieties have been extensively studied. Motivated by work of Kodaira \cite{Kodaira1968} and Bombieri \cite{Bombieri1973}, who studied pluricanonical maps of surfaces of general type, one wants to understand explicitly when  pluricanonical line bundles or more generally adjoint line bundles on higher dimension varieties are globally generated or very ample.  In \cite{Fujita1988}, Fujita posted the following conjecture.

\begin{conj}[Fujita]
Let $X$ be a smooth projective variety of dimension $n$ and $L$ be an ample line bundle on $X$. Then $\mathcal{O}_X(K_X+mL)$ is globally generated if $m\geq n+1$.  $\mathcal{O}_X(K_X+mL)$ is very ample if $m\geq n+2$.
\end{conj}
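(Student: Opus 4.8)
The plan is to prove both halves of the conjecture by the multiplier-ideal method, reducing global generation to the separation of one point, and very ampleness to the separation of two points (or of a length-two jet), by a section of $\O_X(K_X+mH)$. First I would fix $x\in X$ and note that it suffices to produce an effective $\mb{Q}$-divisor $D\sim_{\mb{Q}}cH$ with $c<m$ whose non-klt locus has $x$ as an isolated point. Indeed, writing $Z=V(\mc{J}(X,D))$, the class $mH-D\sim_{\mb{Q}}(m-c)H$ is ample, so Nadel vanishing gives $H^1\!\left(X,\O_X(K_X+mH)\otimes\mc{J}(X,D)\right)=0$ and hence a surjection $H^0(X,\O_X(K_X+mH))\twoheadrightarrow H^0(Z,\O_Z(K_X+mH))$; a section nonzero at $x$ then lifts. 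When $m\geq n+1$ the coefficient budget is $c\leq n$; for very ampleness the extra unit $m\geq n+2$ is spent arranging $Z$ to contain two prescribed points, or a length-two scheme at one point.

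The heart of the matter is constructing such a $D$ with the prescribed isolated singularity while keeping $c\leq m-1$. I would proceed in stages. Since $h^0(X,\O_X(kH))$ grows like $\tfrac{H^n}{n!}k^n$ while imposing vanishing to order $t$ at $x$ costs $\binom{t+n-1}{n}\sim\tfrac{t^n}{n!}$ conditions, a dimension count produces divisors in $|kH|$ of large multiplicity at $x$, hence an effective $D_0\sim_{\mb{Q}}c_0H$ that is not klt at $x$ (this first step is itself delicate when $H^n$ is small). Among all such divisors I would choose one whose non-klt locus has a log canonical center $W\ni x$ of minimal dimension $d$. If the non-klt locus at $x$ is already the reduced point $\{x\}$, Nadel vanishing applies. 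Otherwise I would run the cutting-down (tie-breaking) procedure: by Kawamata's structure theorem the minimal center $W$ is normal with mild singularities, and Kawamata subadjunction endows $W$ with an ample $\mb{Q}$-class that agrees with $H|_W$ up to a small correction; using a subvariety $V\subseteq W$ through $x$ with $(H|_W)^{\dim V}\cdot V$ large enough, I would build on $W$ a new divisor of high multiplicity at $x$, lift it to $X$, add it to a small perturbation of $D_0$, and thereby either strictly lower $\dim W$ or isolate $x$ as a center---iterating at most $d$ times before Nadel vanishing concludes.

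The step I expect to be the genuine obstacle---and the reason the conjecture in its sharp form remains open for $n\geq 5$, with the present paper obtaining, for $n=5$, only the weaker hypothesis with constant $7$ rather than the value $n+1=6$ that recovering Fujita for fivefolds would demand---is the numerical bookkeeping inside the inductive cutting-down, most acutely in the case where the relevant restricted linear series on an intermediate center $W$ is not big, i.e.\ where \emph{every} subvariety $V\ni x$ inside $W$ has intersection number $(H|_W)^{\dim V}\cdot V$ only barely above the Fujita threshold. In that regime the divisors one can create on $W$ are too singular along positive-dimensional subloci and insufficiently singular at $x$, so peeling $x$ off as an isolated center costs extra coefficient; summed over the $d$ descent steps this loss is exactly what pushes the required $m$ past $n+1$. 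Showing the loss can be made zero---that each descent is performed while spending coefficient at most $1$---is the crux, carried out by hand by Reider ($n=2$), Ein--Lazarsfeld ($n=3$) and Kawamata ($n=4$); accomplishing the analogous bound for $n=5$, even with the slack constant $7$, is what occupies the remainder of the paper. A secondary difficulty, dictating the precise choice of perturbing divisors, is keeping every intermediate ambient variety normal and $\mb{Q}$-Gorenstein so that adjunction and the vanishing theorems stay available.
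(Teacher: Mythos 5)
This statement is Fujita's conjecture, which the paper records precisely as a \emph{conjecture}: it offers no proof, and none exists in the literature --- the freeness half is known only up to dimension $4$ (Reider, Ein--Lazarsfeld, Kawamata) and the very ampleness half only for surfaces, while the paper itself proves a strictly weaker freeness statement in dimension $5$ with the constant $7$ in place of $n+1=6$. Your write-up is therefore not, and cannot be, a proof; to your credit you say so explicitly. What you give is an accurate sketch of the standard strategy (produce $D\sim_{\mb{Q}}cH$ with $c<m$ whose non-klt locus is isolated at $x$, apply Nadel vanishing, and descend through minimal log canonical centers via subadjunction and tie-breaking), together with a correct identification of where it fails: in the descent one cannot in general isolate $x$ while spending coefficient at most $1$ per step, because the divisors constructible on an intermediate center $W$ when $(H|_W)^{\dim V}\cdot V$ is only barely above threshold are too singular along positive-dimensional loci near $x$. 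That acknowledged loss is exactly the gap; it is the reason Angehrn--Siu obtain a quadratic bound, Helmke and Heier intermediate bounds, and the present paper the constant $7$ in dimension $5$. Since the crux is left as ``the genuine obstacle,'' the proposal establishes nothing beyond what is already known.

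Two further cautions. First, the very ampleness half is even less within reach of this outline: separating two points, or a length-two jet, is not a matter of ``spending one extra unit'' of coefficient --- the known techniques degrade substantially for jet separation, and this half of the conjecture is open already for $n=3$, so that sentence overstates what the method delivers. Second, your historical framing slightly misattributes the paper's ambition: the authors do not attempt Fujita for fivefolds; they prove a Kawamata-type statement under the stronger numerical hypotheses $L^5>7^5$ and $L^d\cdot Z\geq 7^d$, using Helmke's induction criterion and refined deficit bounds, which is a different (weaker) target than the statement you set out to prove.
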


For curves, the conjecture follows from the Riemann-Roch theorem. For surfaces, the conjecture was proved by Reider \cite{Reider1988} using Bogomolov's instability theorem \cite{Bogomolov1978} on rank two vector bundles.  Unfortunately, so far this method is limited to surfaces.

For $n\geq 3$, towards Fujita's freeness conjecture, people follow a cohomological approach developed by Kawamata \cite{Kawamata1984}, Reid \cite{Reid1983} and Shokurov \cite{Shokurov1985}.  The basic idea is roughly the following. Given a point $x\in X$, under certain assumption on positivity of $D=mL$, we can find a general effective $\mathbb{Q}$-divisor $G$ linearly equivalent to $\lambda D$ with $0\leq \lambda<1$ such that the support $Z(G)$ of the multiplier ideal sheaf of $G$ is a normal subvariety containing $x$. We then apply Kawamata-Viehweg vanishing theorem to $(1-\lambda)D$ and reduce to a problem on $Z(G)$. 
Some effective results have been obtained in higher dimensions through this approach.

Demailly \cite{Demailly1993} proved that $\O_X(2K_X+12n^nL)$ is very ample using analytic tools. Koll\'ar \cite{Kollar1993} then proved that $\O_X(2(n+1)(n+2)!(K_X+(n+2)L))$ is globally generated.

When $n=3$, Ein and Lazarsfeld \cite{Ein1993a} proved, among others, that Fujita's freeness conjecture is true. Along with a similar approach, Fujita  \cite{Fujita1993} introduced an new technique, namely, calculation of the restrict volume of the pullback of the ample line bundle $L$ along the exception divisor $E$ of the blowing up at $x\in X$,  and improved Ein and Lazarsfeld's results.

By applying a theorem on extension of $L^2$ holomorphic functions, Angehrn and Siu \cite{Angehrn1995}, lowered the bound to a quadratic bound.

\begin{theorem}[Angehrn-Siu]Let $X$ be a smooth projective variety of dimension $n$ and $L$ be an ample line bundle.
If $m\geq \frac{1}{2}n(n+1)+1$, then  $\mathcal{O}_X(K_X+mL)$ is globally generated.
\end{theorem}
The method was later translated into algebraic setting by Koll\'ar \cite{Kollar1997}.

Following the work of  Fujita, and Angehrn and Siu, Helmke \cite{Helmke1999} obtained effective bounds in arbitrary dimensions in the spirit of Reider.
\begin{theorem}[Helmke]\label{thm-Helmke} Let $X$ be a smooth projective variety of dimension $n$,  $x\in X$ be a point and $L$ be an ample line bundle satisfying  the following conditions
\begin{enumerate}
\item       $L^n > n^n$;
\item      $L^{n-1}\cdot H\geq n^{n-1}$ for any hypersurface $H\subset X$ containing $x$;
\item   $L^d\cdot Z\geq \textup{mult}_xZ\cdot n^d$ for any any subvariety $Z\subset X$ of dimension $d=\dim Z <n$ which contains $x$.
\end{enumerate}
 Then  $\mathcal{O}_X(K_X+mL)$ is globally generated at $x$.
\end{theorem}

Helmke also showed that $\mult_xZ\leq \binom{n-1}{d-1}$ which makes the bound better than Angehrn and Siu's when dimension is not too large.  Balancing both results, Heier \cite{Heier2002} obtain the following.
\begin{theorem}[Heier]Let $X$ be a smooth projective variety of dimension $n$ and $L$ be an ample line bundle.
If $m\geq (e+\frac{1}{2})n^{\frac{4}{3}}+n^{\frac{2}{3}}+1$, then  $\mathcal{O}_X(K_X+mL)$ is globally generated.
\end{theorem}

Helmke's result together with those of Reider, Ein and Lazarsfeld, and Fujita leads to the following stronger conjecture.

\begin{conj}\label{H-K-K-Conj} Let $X$ be a smooth projective variety of dimension $n$,  $x\in X$ be a point and $L$ be an ample line bundle. Assume that
               $L^n > n^n$ and $L^d\cdot Z\geq n^d$ for any subvariety $Z\subset X$ of dimension $d=\dim Z <n$ which contains $x$. Then  $\mathcal{O}_X(K_X+mL)$ is globally generated at $x$.
\end{conj}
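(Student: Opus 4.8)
\textbf{Proof strategy for Conjecture~\ref{H-K-K-Conj}.} (We read the conclusion with $m=1$, i.e. as global generation of $\O_X(K_X+L)$ at $x$, which is the sharp case; any $m\ge 1$ then follows by applying the $m=1$ statement to $mL$.) The plan is to push the standard multiplier ideal / non-klt locus machinery — cut down to a minimal log canonical center through $x$ and induct on its dimension — as hard as possible, feeding in the numerical hypotheses $L^n>n^n$, $L^d\cdot Z\ge n^d$ together with Helmke's structural bound $\mult_x Z\le\binom{n-1}{d-1}$ and restricted-volume estimates, and fighting for the sharp constant $n^n$ at every stage. \textbf{Step 1 (create a non-klt divisor through $x$).} Blow up $X$ at $x$, with exceptional divisor $E\cong\mathbb{P}^{n-1}$; using $L^n>n^n$ and asymptotic Riemann--Roch on the blow-up, exactly as in the classical Fujita and Angehrn--Siu arguments, produce an effective $\mathbb{Q}$-divisor $D\equiv_{\mathbb{Q}}\lambda L$ with $0\le\lambda<1$ such that $x$ lies in the non-klt locus of $(X,D)$. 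Rescale and tie-break (Kawamata's perturbation trick), using the inequalities $L^d\cdot Z\ge n^d$ to keep intermediate non-klt centers from being too positive, so that $\lct_x(D)=1$ and the non-klt locus has a unique irreducible component $W\ni x$, the minimal lc center of $(X,D)$ at $x$; set $d=\dim W$, so $0\le d\le n-1$.

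\textbf{Step 2 (vanishing and reduction to $W$).} Since $K_X+L-D\equiv_{\mathbb{Q}}(1-\lambda)L$ is ample, Nadel (Kawamata--Viehweg) vanishing gives $H^1\bigl(X,\mc{J}(D)\otimes\O_X(K_X+L)\bigr)=0$, hence a surjection
\[
H^0\bigl(X,\O_X(K_X+L)\bigr)\twoheadrightarrow H^0\bigl(W,\O_X(K_X+L)\otimes\O_W\bigr);
\]
if $d=0$ we are done, so assume $1\le d\le n-1$. \textbf{Step 3 (subadjunction and descending induction on $d$).} By Kawamata subadjunction, $W$ is normal with at worst klt singularities and carries an effective $\mathbb{Q}$-divisor $\Delta_W$ with $(K_X+D)|_W\equiv_{\mathbb{Q}}K_W+\Delta_W$, whence
\[
(K_X+L)|_W\equiv_{\mathbb{Q}}K_W+\Delta_W+(1-\lambda)L|_W .
\]
The hypotheses $L^{d'}\cdot Z\ge n^{d'}$ for all $Z\subseteq W$ through $x$ bound the restricted volume $\vol_{X|W}(L)$ below by $L^d\cdot W\ge n^d>d^d$ (as $d<n$) and control the restriction of $L$ to every subvariety of $W$ through $x$, so one re-runs Steps 1--2 on the klt pair $(W,\Delta_W)$ with $L|_W$ in place of $L$, producing a minimal lc center of strictly smaller dimension; iterating, the program terminates at a point, and chasing the surjections back up $X\supset W\supset\cdots\supset\{x\}$ yields global generation of $\O_X(K_X+L)$ at $x$ — \emph{provided} the positivity spent at the $i$-th stage, the factor $(1-\lambda_i)$, never exhausts the room guaranteed by the numerical hypotheses.

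\textbf{Main obstacle.} The proviso is exactly where the difficulty lies, and it is why Conjecture~\ref{H-K-K-Conj} remains open in general while Theorem~\ref{thm-Helmke} does not: Helmke's hypothesis (3) demands $L^d\cdot Z\ge\mult_x Z\cdot n^d$, whereas the conjecture supplies only $L^d\cdot Z\ge n^d$. When an intermediate (or the minimal) lc center $W$ is singular at $x$, with $\mu:=\mult_x W>1$, the crude restricted-volume estimate on $W$ loses precisely the factor $\mu$ — the strict transform of $W$ meets the exceptional divisor $E$ with ``degree $\mu$'' — and with the sharp constant there is no slack to absorb it. Closing the gap therefore requires either (i) a refined lower bound for $\vol_{X|W}(L)$, or for $\vol_{X'|E}(\pi^*L-tE)$ on the blow-up $\pi\colon X'\to X$, that converts a high multiplicity of $W$ at $x$ into \emph{extra} positivity — a higher-dimensional, restricted-volume strengthening of the Ein--Lazarsfeld and Fujita exceptional-divisor estimates — or (ii) a global argument excluding high-multiplicity minimal centers through $x$ under the stated intersection inequalities. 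Making either precise uniformly in $n$ is the genuinely hard step, and it is where any proof of the full conjecture must concentrate; the present paper instead carries the program through only for $n=5$, after relaxing $n^n$ to $7^n$, the extra slack being precisely what lets the multiplicities (bounded by $\binom{4}{d-1}\le 6$) be absorbed.
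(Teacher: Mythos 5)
The statement you were asked to prove is labeled a \emph{conjecture} in the paper, and the paper contains no proof of it: its actual result (Theorem \ref{Thm-dim-5}) establishes only the case $n=5$ with the constant $n=5$ relaxed to $7$, via Helmke's induction criterion (Proposition \ref{Prop:Helmke'sInductionCriterion}), the deficit bounds of Section \ref{Sec-deficit}, and the restricted-volume estimates of Section \ref{Sec-volume}. Your submission, to your credit, is not a proof either and does not pretend to be one: Steps 1--3 are a correct rehearsal of the standard Angehrn--Siu/Helmke program (produce a non-klt divisor $D\equiv_{\mathbb{Q}}\lambda L$ through $x$, tie-break to a minimal lc center $W$, apply Nadel vanishing and subadjunction, and induct on $\dim W$), and your ``Main obstacle'' paragraph correctly locates the genuine gap: with only $L^d\cdot Z\ge n^d$ (rather than Helmke's $L^d\cdot Z\ge \mult_xZ\cdot n^d$), the inequality \eqref{ineq} needed to run the induction fails whenever the center $W$ is sufficiently singular at $x$, and at the sharp constant $n^n$ there is no slack left to absorb $\mult_xW$. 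Neither of the two repairs you name --- a multiplicity-sensitive lower bound for the relevant restricted volumes, or an argument excluding high-multiplicity minimal centers --- is supplied, so the proposal does not prove the conjecture; but since the conjecture is open and the paper likewise does not prove it, this is the honest and correct state of affairs.

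Your diagnosis also matches how the paper actually proceeds: the move from $n^n$ to $7^n$ in dimension $5$ is exactly the slack used to absorb the multiplicities, which are controlled by Theorem \ref{Thm:boundofmult} (giving $\mult_xZ\le\binom{4}{d-1}\le 6$), the refined deficit bound $\alpha_{d,e}(m)$ of Definition \ref{Def:alpha}, and the divisor-case estimate of Corollary \ref{upperbound-p}. Two small points: the $m$ in the conjecture's conclusion $\O_X(K_X+mL)$ is evidently a typo for $K_X+L$ (or for an $mH$ with the hypotheses placed on $mH$), and your reading with $m=1$ is the intended one; and your reduction ``any $m\ge1$ follows by applying the case $m=1$ to $mL$'' is fine, since $(mL)^d\cdot Z\ge n^d$ persists. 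In short: no error of substance, but also no proof --- as must be the case, since the statement remains a conjecture both in this paper and in the literature.
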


Conjecture \ref{H-K-K-Conj} is true for surfaces and 3-folds. When $n=4$, the following theorem of Kawamata \cite{Kawamata1997} confirmed Fujita's base point freeness conjecture but not  Conjecture \ref{H-K-K-Conj}.
\begin{theorem}[\cite{Kawamata1997}]\label{thm:Kawamata}
Let $X$ be a smooth projective variety of dimension 4,  $L$ be an ample line bundle on $X$ and $x\in X$ be a point. Assume that $L^d\cdot Z\geq 5^d$ for any subvariety $Z\subseteq X$ of dimension $d$ which contains $x$.
Then $\O_X(K_X+L)$ is globally generated at $x$.
\end{theorem}

In this paper, we show a Kawamata-type result on projective 5-folds by carefully analyzing upper bounds of deficit functions (Definition \ref{def:deficit}) and applying Helmke's induction criterion (Proposition \ref{Prop:Helmke'sInductionCriterion}).  More precisely, we prove that $\O_X(K_X+L)$ is globally generated if $\sqrt[5]{L^5}>7$ and $\sqrt[d]{L^d\cdot Z}\geq 7$ for any subvariety $Z$ of dimension $d\leq 4$ (Theorem \ref{Thm-dim-5}). The same proof also works on $4$-folds and reproduces Kawamata's result. 

The paper is organized as follow. We start by recalling some definitions and results from \cite{Helmke1999} and \cite{Ein1997} in Section \ref{Sec-deficit}. In section \ref{Sec-volume}, we study restricted volumes along exceptional divisors and their applications. We prove our result in Section \ref{Sec-proof}.

\noindent \textbf{Acknowledgements.} 
The authors would like to thank Professor Lawrence Ein for introducing this topic and his encouragement. They would also like to thank Professors Lawrence Ein, Mircea Musta\c{t}\u{a} and Karl Schwede for helpful discussions.  
The authors would like to thank the referee for carefully reading our manuscript and for his/her valuable comments which helped to improve the manuscript.
 
\section{The Deficit Function and Critical Varieties\label{Sec-deficit}}
In this section,  $X$ will be a smooth projective variety of dimension $n$ and $G$ will be an effective $\mathbb{Q}$-divisor on $X$. The multiplier ideal of $G$ is defined as $\mc{I}(G)=f_*\mathcal{O}_Y(K_{Y/X}-[f^*G])$, where $f: Y\to X$ is a log resolution of $G$. We denote by $Z(G)$ the scheme defined by $\mc{I}(G)$.

To prove the global generation of an adjoint line bundle, the key observation is the following lemma.

\begin{lemma}\label{cor:0-dimb-p-f}
Assume that $G$ is an effective $\mb{Q}$-divisor such that $Z(G)$ is $0$-dimensional at $x$. Let $A$ be an integral divisor such that $A-(K_X+G)$ is nef and big. Then the line bundle $\O_X(A)$ is globally generated at $x$.
\end{lemma}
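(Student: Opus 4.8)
The plan is to use the Nadel vanishing theorem together with the exact sequence defining the $0$-dimensional scheme $Z(G)$ near $x$. First I would pass to a log resolution $f\colon Y\to X$ of $G$ and write $\mc{I}(G)=f_*\O_Y(K_{Y/X}-[f^*G])$, so that $Z(G)$ is the subscheme cut out by this ideal sheaf. The hypothesis that $Z(G)$ is $0$-dimensional at $x$ means that, after possibly shrinking to an open neighborhood (or using a general choice argument to separate the point $x$ from the rest of $Z(G)$), we may treat $x$ as an isolated point of the support of $\O_X/\mc{I}(G)$, so that $\O_{Z(G)}$ has a direct summand $\O_{Z(G),x}$ supported at $x$.

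Next I would twist the short exact sequence
\begin{equation*}
0\longrightarrow \mc{I}(G)\otimes\O_X(A)\longrightarrow \O_X(A)\longrightarrow \O_{Z(G)}\otimes\O_X(A)\longrightarrow 0
\end{equation*}
and take cohomology. The key input is that $A-(K_X+G)$ is nef and big, so by the Nadel (Kawamata--Viehweg type) vanishing theorem for multiplier ideals one has
\begin{equation*}
H^1\bigl(X,\mc{I}(G)\otimes\O_X(A)\bigr)=H^1\bigl(X,\O_X(K_X+G+(A-(K_X+G)))\otimes\mc{I}(G)\bigr)=0.
\end{equation*}
Therefore the restriction map $H^0(X,\O_X(A))\to H^0(Z(G),\O_X(A)|_{Z(G)})$ is surjective. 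Since $x$ is an isolated point of $Z(G)$, the target surjects onto the fiber $\O_X(A)\otimes k(x)$ (the contribution from the other components of $Z(G)$ only enlarges the space of sections we can hit), so there is a global section of $\O_X(A)$ not vanishing at $x$; combined with the fact that $\mc{I}(G)_x\subseteq \mathfrak{m}_x$ forces $x\in Z(G)$, this is exactly global generation of $\O_X(A)$ at $x$.

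The only genuinely delicate point is the reduction to the situation where $x$ is an \emph{isolated} point of $Z(G)$: a priori $Z(G)$ could be $0$-dimensional at $x$ but have positive-dimensional components elsewhere, or further $0$-dimensional components, and one must make sure the evaluation at $x$ is still surjective. This is handled by noting that $0$-dimensionality at $x$ gives a decomposition $\O_{Z(G)}=\O_{Z(G),x}\oplus\mc{Q}$ in a neighborhood and that the surjection onto $H^0$ of the structure sheaf of $Z(G)$ composed with projection to the length-$\geq 1$ local ring $\O_{Z(G),x}$ still surjects onto its residue field $k(x)$; everything else (the Nadel vanishing, the cohomology sequence) is formal. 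So the main obstacle is bookkeeping about the scheme structure of $Z(G)$ rather than any hard estimate.
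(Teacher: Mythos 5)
Your proposal is correct and is precisely the standard argument this lemma rests on: Nadel vanishing for $\mc{I}(G)\otimes\O_X(A)$ (valid since $A-(K_X+G)$ is nef and big), surjectivity of restriction to $\O_{Z(G)}$, and the splitting of $\O_{Z(G)}$ near the isolated point $x$ so that evaluation at $x$ is surjective. The paper states the lemma without proof, and your argument, including the bookkeeping that handles the other components of $Z(G)$ and the fact that $\mc{I}(G)_x\subseteq\mathfrak{m}_x$, is the intended one.
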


To apply this lemma to the adjoint line bundle $K_X+L$, we will assume that $L^n>n^n$. By Riemann-Roch theorem, we can construct an effective divisor $G$ linearly equivalent to $\lambda L$ with $0<\lambda<1$ such that $x\in Z(G)$. Then we will show that $\dim Z(G)=0$ for a suitable choice of $G$. However, for a general choice of $G$, $\dim Z(G)$ may be positive, we have to find a way to modify the initial divisor $G$ to reduce the dimension.  For that purpose, we want $Z(G)$ to be ``minimal'' in the following sense. 

\begin{Def}\label{critical}
Let $x$ be a closed point in $X$. We say that $G$ is {\it critical} at $x$ if 
\begin{enumerate}
\item $x\in Z(G)$ and $x\not\in Z((1-\varepsilon)G)$ for any $0<\varepsilon <1$ and
\item $K_{Y/X}-[f^*G]=P-F-N$ for a log resolution $f: Y\to X$ such that $P$, $F$ and $N$ are effective with no common component,  $F$ is a prime divisor, $x\in f(F)$ and $N\cap f^{-1}(x)=\emptyset$.
\end{enumerate}
The component $F$ is called the critical component of $G$ at $x$ and $Z=f(F)$ is called the {\it critical variety} of $G$ at $x$.
\end{Def}

\begin{rmk}[Existence,  minimality and  normality  of critical varieties]\label{tie-breaking}The technical details of the following remarks can be found in \cite{Ein1997} or \cite{Lee1999}.
\begin{enumerate}\item In general, an effective divisor $G$  satisfying (1) of Definition \ref{critical} may not satisfy the condition (2) of Definition \ref{critical}. However, if $G$ is ample, then by perturbing $G$ a little bit, one can construct a new divisor $G'$ linearly equivalent to $(1+\varepsilon)G$ with $0\leq |\varepsilon| \ll1$ such that $G'$ is critical at $x$. This technique is called the tie-breaking trick.

\item Assume that $G$ is critical. Then $(X, G)$ is log canonical at the point $x$ with the minimal log canonical center $Z(G)$. Conversely, assume that $(X, G)$ is log canonical at $x$ with the minimal log canonical center $Z=Z(G)$. If $G$ is not critical at $x$, then by the tie-breaking trick, one can create a new divisor $G'$ linearly equivalent to $(1+\varepsilon)G$ with $0\leq |\varepsilon| \ll1$ such that $G'$ is critical at $x$ with the critical variety $Z(G')=Z$.

\item Let $Z$ be a critical variety at $x$. Then $Z$ is normal at $x$. In particular, if $Z$ is a curve, then $Z$ is smooth at $x$.
\end{enumerate}
\end{rmk}

An invariant measuring the difficulty of creating a new divisor $G'$ with $Z(G')\subsetneq Z(G)$ is the deficit $\df_x(G)$ of $G$ at $x$. This concept was introduced by Ein and Helmke independently (see \cite{Ein1997} and \cite{Helmke1997}). We follow Ein's definition (Section 4, \cite{Ein1997}) with an adaptation of Helmke's definition of ``wildness" (Section 3, \cite{Helmke1999} ).

Let $\pi : Y'\to X$ be the blowing-up of $x$ and $E$ be the exceptional divisor. Let $g : Y\to Y'$ be a log resolution of $\pi^*(G)+E$. We then have a log resolution $f=g\circ\pi : Y\to X$. Write $K_Y-f^*(K_X+G)=\sum a_jF_j$ and $g^*(E)=\sum e_jF_j$.

\begin{Def}\label{def:deficit}
If $x\not\in Z(G)$, we define the {\it deficit} of $G$ at $x$ as $$\df_x(G):=\inf_{f(F_j)=x}\{\dfrac{a_j+1}{e_j}\},$$ where $f$ varies among all log resolutions factoring through $\pi$. If $G$ is log canonical at $x$, we define
\[\df_x(G):=\lim_{t\to 0^+}\df_x((1-t)G).\]

Let $Z$ be a subvariety of $X$ containing $x$ and $G$ be an effective divisor such that $(X, G)$ is log canonical at $x$. We define the {\it relative deficit} of $G$ over $Z$ at $x$ as \[\df_x^Z(G):=\sup\limits_{D}\left\{\df_x(G+D) \biggr|\parbox{0.525\textwidth}{$(X,G+D)$ is log canonical at $x$, $Z(G+D)=Z$, and $D$ is an effective $\mb{Q}$-divisor}\right\}.\] If there is no effective divisor $D$ such that $(X,G+D)$ is log canonical at $x$ with $Z(G+D)=Z$, we define $\df_x^Z(G)=0$. In the case that $G=0$, we write $\df_x^Z$ for $\df_x^Z(0)$.
\end{Def}

\begin{rmk} Assume that $G$ is log canonical at $x$.  We note that the deficit is the same as Helmke's definition of local discrepancy  \cite[Definition in Section 2]{Helmke1999}:
  \[\begin{split}\df_x(G)
  &=\sup\limits_{D}\left\{\ord_xD\biggr|\parbox{0.5\textwidth}{$(X, G+D)$ is log canonical at $x$, where $D$ is an effective $\mb{Q}$--divisor}\right\}.\end{split}\]
  (See \cite[Definition in Section 3]{Helmke1997} and \cite[Section 4]{Ein1997} for equivalent characterizations.)
  From this interpretation, we see that the definition of deficit is independent of the choice of log resolution.  
 If $Z=Z(G)$, then $\df_x^Z(G)=\df_x(G)$.
\end{rmk}

The following lemma, based on Siu's idea, shows when we can construct a new divisor $G'$ with $Z(G')\subsetneq Z(G)$.

\begin{lemma}[{\cite[Lemma 4.6]{Ein1997}}]\label{lemma:Ein-Siu}Let $X$ be a smooth projective variety with an ample line bundle $L$. 
\begin{enumerate}\item Let $G$ be an effective $\mathbb{Q}$--Cartier divisor on $X$ with critical variety $Z$ at $x\in X$.
\item Let $B$ be an effective $\mathbb{Q}$--Cartier divisor on $Z$ which is linearly equivalent to $qL|_Z$ for some positive rational number $q$.
\item Assume that $\ord_xB>\df_x(G)$. 
\end{enumerate}
Then there is an effective $\mathbb{Q}$--divisor $D$ linearly equivalent to $qL$ on $X$ such that $D|_Z=B$ and a new divisor $G'=(1-t)G+ D'$ for some positive number $t<1$ such that $G'$ is critical at $x$ with $Z((1-t)G+ D')\subsetneq Z$, where $D'$ is a small perturbation of $sD$ for {some} $0<s\leq 1$. More precisely, $D'=s'D+ H$ with $H$ a $\mathbb{Q}$--divisor linearly equivalent to $\delta L$, where $\delta$ is a sufficiently small positive number such that $t$ and $s-s'$ are  also sufficiently small positive numbers.

Moreover,  \[\df_x(G')\leq \df_x((1-t)G)-\ord_xD'|_Z.\]
\end{lemma}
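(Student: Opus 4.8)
The plan is to carry out Siu's construction in three moves. First I would extend $B$ from $Z$ to an effective $\mathbb{Q}$-divisor $D$ on all of $X$ with $D|_Z = B$ and $D$ linearly equivalent to $qL$. Then, after scaling $G$ down by a factor $(1-t)$ so that $G$ no longer ``sees'' $x$, I would add a carefully perturbed fraction of $D$ to force a new log canonical place whose center lies strictly inside $Z$ and passes through $x$; this is where the tie-breaking trick of Remark~\ref{tie-breaking} enters. Finally I would read off the bound on $\df_x(G')$ by restricting the test divisors to $Z$ and using additivity of orders of vanishing.

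For the extension I would use a vanishing argument. Since $G$ is critical at $x$, the scheme $Z(G)$ is reduced and equals $Z$ in a neighbourhood of $x$ (Remark~\ref{tie-breaking}), so after a harmless modification of $G$ (adding a small general divisor through $x$ to separate $Z$ from the other components of $Z(G)$, or arguing component-wise) we may treat $\mathcal{O}_X/\mathcal{I}(G)$ as $\mathcal{O}_Z$. Choose $m$ divisible enough that $mq\in\mathbb{Z}$; since $L$ is ample, $mqL-(K_X+G)$ is ample for $m\gg 0$, so Nadel's vanishing theorem gives $H^1\!\left(X,\mathcal{I}(G)\otimes\mathcal{O}_X(mqL)\right)=0$, whence the restriction $H^0(X,\mathcal{O}_X(mqL))\to H^0(Z,\mathcal{O}_Z(mqL|_Z))$ is surjective. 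Lifting the member $mB$ of $|mqL|_Z|$ to a divisor in $|mqL|$ and dividing by $m$ produces the desired $D$.

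The heart of the argument is the second move. Because $G$ is critical, $\df_x(G)=\lim_{t\to 0^+}\df_x((1-t)G)$, so I may fix a small $t>0$ with $\df_x((1-t)G)<\ord_x B$; note that $(1-t)G$ is klt at $x$ by condition~(1) of Definition~\ref{critical}, with $\df_x((1-t)G)>0$. I would then consider the pairs $(X,(1-t)G+sD)$ for $0<s\le 1$. Restricting to $Z$ and using subadjunction for the minimal lc center $Z$ of $(X,G)$, the divisor $(1-t)G$ transforms into a klt boundary on $Z$ while $sD$ restricts to $sB$, and since $\ord_x B$ exceeds the room available on $(1-t)G$, inversion of adjunction forces $(X,(1-t)G+sD)$ to acquire a log canonical place over a subvariety of $Z$ through $x$ once $s$ reaches the relevant threshold, with $s\le 1$. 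To make the new center \emph{strictly} smaller than $Z$ while keeping it through $x$, I would replace $sD$ by the tie-breaking perturbation $D'=s'D+\delta H$ with $s-s'$ and $\delta$ small positive and $H\in|L|$ through $x$: the term $\delta H$ pins the new center to pass through $x$ and makes $G'=(1-t)G+D'$ critical there, exactly as in Remark~\ref{tie-breaking}, while $s'<s$ keeps us just below the threshold, so that $Z(G')$ is a proper subvariety of $Z$ containing $x$.

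For the displayed inequality I would restrict once more to $Z$: any effective test divisor on $X$ witnessing that $(X,G'+(\text{test}))$ is log canonical at $x$ restricts to one on $Z$, and the order of vanishing at $x$ is additive, so the room left for $G'$ is at most the room for $(1-t)G$ diminished by $\ord_x(D'|_Z)$, giving $\df_x(G')\le\df_x((1-t)G)-\ord_x D'|_Z$. I expect the main obstacle to be the middle move, namely arranging that the newly created log canonical center is simultaneously contained in $Z$, strictly smaller than $Z$, and passing through $x$: this is a delicate balancing of the parameters $t$, $s'$ and $\delta$, and is precisely why the statement carries the awkward perturbation $D'=s'D+\delta H$ rather than simply $sD$.
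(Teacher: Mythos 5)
The paper does not prove this lemma; it is imported verbatim from \cite{Ein1997}, so there is no internal proof to compare against. Your sketch follows the same route as the cited source (and as Koll\'ar's treatment in \cite{Kollar1997}): extend $B$ by a vanishing theorem, pass to a log canonical threshold, tie-break, and record the deficit drop. As an outline it is the right one.

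That said, each of your three moves asserts exactly the step where the real work sits. (1) Nadel vanishing kills $H^1(X,\mathcal{I}(G)\otimes\O_X(mqL))$, but the surjection you need is onto $H^0(Z,\O_Z(mqL))$, and globally $\mathcal{I}(G)$ is strictly contained in $\mathcal{I}_Z$; adding a small general divisor through $x$ does not identify $\O_X/\mathcal{I}(G)$ with $\O_Z$. The cited proof instead exploits the criticality decomposition $K_{Y/X}-[f^*G]=P-F-N$ with $F$ irreducible and $N\cap f^{-1}(x)=\emptyset$ to build an adjoint-type exact sequence $0\to\mathcal{I}(G)\to\mathcal{J}\to\O_Z\otimes(\,\cdot\,)\to 0$ and lifts sections through $\mathcal{J}$; that is the mechanism, not a direct identification of ideal sheaves. (2) At the threshold $s_0\leq 1$ (which you correctly get from $\ord_xD\geq\ord_xB>\df_x((1-t)G)$), inversion of adjunction produces a non-klt center through $x$ meeting $Z$, but it does not by itself place \emph{every} such center inside $Z$: a particular lift $D$ could be badly singular along some other subvariety through $x$. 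The fix is genericity of $D$ in the linear system of lifts of $B$, combined with the $\delta H$ term, so that near $x$ the non-klt locus of the new pair lies in $Z$; properness of $Z(G')\subsetneq Z$ then follows from $Z\not\subseteq\textup{Supp}\,D$. You flag this as the delicate point, which is right, but the remedy is genericity of the extension rather than more adjunction. (3) Restricting test divisors and using additivity of $\ord_x$ on $X$ only yields $\df_x(G')\leq\df_x((1-t)G)-\ord_xD'$; the stated inequality carries the stronger term $\ord_x(D'|_Z)\geq\ord_xD'$ and needs the relative deficit over $Z$ (Proposition \ref{prop:Ein-Helmke}(\ref{part-3}) run along $Z$, i.e., measuring orders after restriction to the center). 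None of these is a wrong turn, but all three require the machinery of \cite{Ein1997} that your sketch currently replaces with a phrase.
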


In practice, we have the following induction criterion due to Helmke which can be viewed as a consequence of the above lemma.

\begin{prop}[{\cite[Proposition 3.2]{Helmke1997}}]\label{Prop:Helmke'sInductionCriterion}
       Let $X$ be a smooth projective variety,  $L$ be an ample line bundle over $X$ and $G$ be a $\mathbb{Q}$-divisor linearly equivalent to $ \lambda L$ for some positive rational number $\lambda<1$. Assume that $G$ is critical at $x$ with $\df_x(G)$. Let $Z$ be the critical variety of $G$ at $x$ and $d=\dim Z>0$. If
    \begin{equation}\label{ineq}L^d\cdot Z>(\dfrac{\df_x(G)}{1-\lambda})^d\cdot \mult_x Z,\end{equation}
then there is a $\mathbb{Q}$-divisor $G'$ linearly equivalent to $\lambda'L$ with $\lambda<\lambda'<1$ such that $G'$ is critical at $x$ with the critical variety $Z'$ which is properly contained in $Z$ and  $$\dfrac{\df_x(G')}{1-\lambda'}<\dfrac{\df_x(G)}{1-\lambda}.$$
      \end{prop}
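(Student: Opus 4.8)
The plan is to feed a suitably singular auxiliary divisor on $Z$ into Lemma~\ref{lemma:Ein-Siu}: that lemma manufactures the desired $G'$ as soon as one supplies an effective $\mathbb{Q}$-divisor $B$ on $Z$ with $B\sim qL|_Z$ and $\ord_x B>\df_x(G)$, and the numerical hypothesis \eqref{ineq} is exactly the statement that such a $B$ exists with $q$ in a usable range. Write $\mu=\df_x(G)$ and $c=\sqrt[d]{(L^d\cdot Z)/\mult_x Z}$, so that \eqref{ineq} becomes $\mu/(1-\lambda)<c$. First I would fix rational numbers $q$ and $\tau$ with
\[
\frac{\mu}{c}<q<1-\lambda, \qquad \mu<\tau<qc .
\]
The first window is nonempty precisely because $\mu/c<1-\lambda$ is a rearrangement of \eqref{ineq}; once $q$ is chosen one has $qc>\mu$, so the second window is nonempty as well. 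The real content of these choices is that they guarantee the two inequalities $\tau>\mu=\df_x(G)$ and $q<1-\lambda$, which together drive everything that follows.

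Next I would construct $B$ by a routine dimension count on $Z$. As $L$ is ample, $L|_Z$ is ample on the $d$-dimensional projective variety $Z$, so asymptotic Riemann--Roch gives $h^0\bigl(Z,\O_Z(mqL|_Z)\bigr)=\frac{L^d\cdot Z}{d!}(mq)^d+O(m^{d-1})$ for $m$ divisible enough, whereas requiring vanishing to order $\ge m\tau$ at $x$ imposes at most $\ell\bigl(\O_{Z,x}/\mathfrak{m}_x^{\,m\tau}\bigr)=\frac{\mult_x Z}{d!}(m\tau)^d+O(m^{d-1})$ conditions, the leading coefficient being the Hilbert--Samuel multiplicity $\mult_x Z$. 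This is the only point where the possible singularity of $Z$ at $x$ enters, and it is what forces the factor $\mult_x Z$ into \eqref{ineq}. Since $(L^d\cdot Z)q^d>\mult_x Z\cdot\tau^d$ (equivalently $\tau<qc$), for $m\gg0$ there is a nonzero section of $mqL|_Z$ vanishing to order $\ge m\tau$ at $x$; dividing its divisor by $m$ produces an effective $\mathbb{Q}$-divisor $B\sim qL|_Z$ on $Z$ with $\ord_x B\ge\tau>\df_x(G)$.

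I would then apply Lemma~\ref{lemma:Ein-Siu} to $G$ and this $B$. It yields $D\sim qL$ on $X$ with $D|_Z=B$ and a divisor $G'=(1-t)G+D'$, where $D'=s'D+\delta H$ with $H\in|L|$ through $x$ and $t,\,s-s',\,\delta$ small positive (with $0<s'<s\le1$), such that $G'$ is critical at $x$, $Z':=Z(G')\subsetneq Z$, and $\df_x(G')\le\df_x((1-t)G)-\ord_x D'|_Z$. Set $\lambda':=(1-t)\lambda+s'q+\delta$, so that $G'\sim\lambda'L$; since $s'<1$ and $q<1-\lambda$, shrinking $\delta$ and then $t$ gives $\lambda<\lambda'<1$. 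For the normalized-deficit inequality, use $\ord_x D'|_Z\ge s'\,\ord_x B\ge s'\tau$ and $1-\lambda'=(1-\lambda)+t\lambda-s'q-\delta$ to get
\[
\frac{\df_x(G')}{1-\lambda'}\ \le\ \frac{\df_x\bigl((1-t)G\bigr)-s'\tau}{(1-\lambda)+t\lambda-s'q-\delta}.
\]
As $t,\delta\to0^+$ the right-hand side tends to $\frac{\mu-s\tau}{(1-\lambda)-sq}$ (using $\df_x((1-t)G)\to\df_x(G)=\mu$ and $s'\to s$), and the denominator remains positive because $sq\le q<1-\lambda$. A one-line cross-multiplication shows that $\frac{\mu-s\tau}{(1-\lambda)-sq}<\frac{\mu}{1-\lambda}$ if and only if $\tau/q>\mu/(1-\lambda)$, which holds since $\tau>\mu$ and $q<1-\lambda$. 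Hence the limit is $<\mu/(1-\lambda)$, so for $t$ and $\delta$ small the conclusion $\df_x(G')/(1-\lambda')<\df_x(G)/(1-\lambda)$ follows.

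The genuinely substantive step, namely extending $B$ from $Z$ to $X$ and running the tie-breaking to obtain a critical $G'$ with strictly smaller critical variety, is already packaged in Lemma~\ref{lemma:Ein-Siu}. So the remaining difficulty is twofold: first, the dimension-count construction of $B$ on a possibly singular $Z$, where one must know that imposing multiplicity $m\tau$ at $x$ costs $\sim\mult_x Z\cdot(m\tau)^d/d!$ conditions rather than $(m\tau)^d/d!$; and second, the bookkeeping of the parameters $q,\tau,s',t,\delta$ so that $\ord_x B>\df_x(G)$, $\lambda<\lambda'<1$, and the strict drop of the normalized deficit all hold at once. I expect this bookkeeping to be the fussy part, but it becomes transparent once one recognizes $\tau>\df_x(G)$ and $q<1-\lambda$ as the only two inequalities that matter.
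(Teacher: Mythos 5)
Your proof is correct and follows exactly the route the paper indicates: the paper gives no proof of this proposition, citing Helmke and remarking that it "can be viewed as a consequence of" Lemma~\ref{lemma:Ein-Siu}, and your argument is precisely that derivation — use the numerical hypothesis \eqref{ineq} and a Hilbert--Samuel dimension count on $Z$ to produce $B\sim qL|_Z$ with $\ord_xB>\df_x(G)$ and $q<1-\lambda$, then feed it into Lemma~\ref{lemma:Ein-Siu} and verify the drop of the normalized deficit. The parameter bookkeeping at the end checks out (the key cross-multiplication reduces to $\tau/q>\df_x(G)/(1-\lambda)$, as you say), so nothing further is needed.
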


From the Helmke's criterion, we see the importance of controlling upper bounds for the deficit $\df_x(G)$. Thanks to Ein and Helmke, we have some very useful upper bounds for $\df_x(G)$.

\begin{prop}[\cite{Ein1997}, \cite{Helmke1999}]\label{prop:Ein-Helmke}~
  \begin{enumerate}
    \item\label{prop:Ein-Helmke-1}  $\df_x(G)\leq n-\ord_x G.$ {(See \cite[Proposition 4.1]{Ein1997}.)}
    \item If $G$ is critical at $x$, then the critical variety is of dimension zero at $x$ if and only if $\df_x(G)=0$. {(See\cite[Proposition 4.1]{Ein1997}.)}
\item\label{part-3}  Assume that $G$ is critical at $x$ and $x\in Z$ where $Z$ is a subvariety of $X$. Let $D$ be an effective $\mb{Q}$-divisor such that $Z$ is not in the support of $D$ and $\df_x^Z(G+D)>0$. Then $\df_x^Z(G+D)\leq \df_x^Z(G)-\ord_xD$.  {(See \cite[Lemma 3.2]{Helmke1999}.)}
  \end{enumerate}
\end{prop}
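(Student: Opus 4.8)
For the first assertion my plan is a direct discrepancy computation on the distinguished resolution $f=g\circ\pi$. When $x\notin Z(G)$ I would single out the strict transform $\widetilde E\subset Y$ of the exceptional divisor $E\subset Y'$: it occurs among the $F_j$, it satisfies $f(\widetilde E)=x$, and being the strict transform of a Cartier divisor under the birational morphism $g$ its coefficient in $g^*E$ is $e_j=1$. Since a discrepancy depends only on the valuation, the coefficient $a_j$ of $\widetilde E$ in $K_Y-f^*(K_X+G)$ equals the coefficient of $E$ in $K_{Y'}-\pi^*(K_X+G)$, which is $(n-1)-\ord_xG$ because $K_{Y'}=\pi^*K_X+(n-1)E$ and $\ord_E\pi^*G=\mult_xG=\ord_xG$. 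Hence $(a_j+1)/e_j=n-\ord_xG$, so the infimum defining $\df_x(G)$ is at most this value. If $G$ is only log canonical at $x$, then $(1-t)G$ is klt at $x$ for $0<t<1$, so the previous case gives $\df_x((1-t)G)\le n-(1-t)\ord_xG$; letting $t\to 0^+$ finishes it.

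For the second assertion I would first rewrite the deficit concretely. Fixing one log resolution that works for every $(1-t)G$, $0\le t<1$, and passing to the limit $t\to 0^+$, one obtains $\df_x(G)=\inf_{f(F_j)=x}(a_j+1)/e_j$ with $a_j$ now the discrepancy relative to $(X,G)$; since $(X,G)$ is log canonical at $x$ each $a_j\ge -1$, so $\df_x(G)\ge 0$, with equality exactly when some $F_j$ over $x$ has $a_j=-1$, i.e.\ when $(X,G)$ has a log canonical place with center $\{x\}$. It then remains to translate this: if $\df_x(G)=0$, such a place exhibits $\{x\}$ as a log canonical center, and because the critical variety $Z(G)$ is the \emph{minimal} log canonical center through $x$ it must agree with $\{x\}$ near $x$; conversely, if $\dim_xZ(G)=0$, then near $x$ the non-klt locus $Z(G)$ is $\{x\}$, and since this locus is the image of $\bigcup_{a_j=-1}F_j$, some such $F_j$ maps onto $\{x\}$, so $\df_x(G)=0$. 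This translation, which rests on the structure theory of minimal log canonical centers (existence of a log canonical place realizing the minimal center, and its containment in every log canonical center through $x$), is the step I expect to demand the most care; I would cite \cite{Ein1997} for it rather than redo it.

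For the third assertion I would work with the interpretation $\df_x(\Gamma)=\sup\{\ord_xB\mid (X,\Gamma+B)\text{ log canonical at }x\}$ from the remark after Definition \ref{def:deficit}, together with the additivity of $\ord_x$ on effective divisors. Since $\df_x^Z(G+D)>0$, choose $D'\ge 0$ with $(X,G+D+D')$ log canonical at $x$, $Z(G+D+D')=Z$, and $\df_x(G+D+D')\ge\df_x^Z(G+D)-\varepsilon$. The key observation is that $Z\not\subseteq\textup{Supp}(D)$ forces $D$ to vanish on a neighbourhood $U$ of the generic point of $Z$; hence $\mc{I}(G+D')$ and $\mc{I}(G+D+D')$ agree over $U$, and combined with the inclusion $\mc{I}(G+D+D')\subseteq\mc{I}(G+D')$ this pins down $Z(G+D')=Z$. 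Since $(X,G+D')$ is log canonical at $x$ as well (it is a subpair of $(X,G+D+D')$), the divisor $D'$ is admissible in the supremum defining $\df_x^Z(G)$, so $\df_x^Z(G)\ge\df_x(G+D')$. Finally, for every $B$ with $(X,G+D'+D+B)$ log canonical at $x$ the divisor $D+B$ is effective, so $\df_x(G+D')\ge\ord_x(D+B)=\ord_xD+\ord_xB$; taking the supremum over such $B$ gives $\df_x(G+D')\ge\ord_xD+\df_x(G+D+D')\ge\ord_xD+\df_x^Z(G+D)-\varepsilon$. Combining with $\df_x^Z(G)\ge\df_x(G+D')$ and letting $\varepsilon\to 0^+$ yields $\df_x^Z(G+D)\le\df_x^Z(G)-\ord_xD$. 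Beyond the localization point just highlighted, this part is routine bookkeeping.
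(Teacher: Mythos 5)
The paper itself gives no proof of this proposition --- it is quoted from \cite{Ein1997} and \cite{Helmke1999} --- so there is no internal argument to compare against; judged on its own, your proof is correct and is essentially the standard argument from those sources. Part (1), via the strict transform of the exceptional divisor of the blow-up at $x$ (discrepancy $n-1-\ord_xG$, coefficient $1$ in $g^*E$) together with the limit over $(1-t)G$ in the log canonical case, is exactly right. In part (2) you could avoid leaning on the minimal-lc-center structure theory you cite: by the criticality decomposition $K_{Y/X}-[f^*G]=P-F-N$ of Definition \ref{critical}, with $N\cap f^{-1}(x)=\emptyset$, any place $F_j$ with $a_j=-1$ and $f(F_j)=x$ has $\lceil a_j\rceil\le -1$, hence is a component of $F+N$ meeting $f^{-1}(x)$, hence is the critical component $F$ itself, so $Z(G)=f(F)=\{x\}$; the converse is immediate since then $a_F=-1$, $e_F\ge 1$ and $f(F)=\{x\}$. (Minor phrasing point: the non-klt locus is the image of the places with $a_j\le -1$, not only $a_j=-1$; near $x$ log canonicity forces equality for the places through $x$, which is what your argument actually needs.) Part (3) is fine: localization of multiplier ideals over $X\setminus \mathrm{Supp}(D)$ together with $\mc{I}(G+D+D')\subseteq\mc{I}(G+D')$ does pin down $Z(G+D')=Z$ (a closed subscheme of the reduced irreducible $Z$ contained in $Z$ and with dense support equals $Z$), and the remaining bookkeeping is exactly the sup-interpretation of the deficit recorded in the remark after Definition \ref{def:deficit}, applied to $G+D'$ and $G+D+D'$.
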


By Proposition \ref{prop:Ein-Helmke} \eqref{prop:Ein-Helmke-1} and the definition of relative deficit, we know that if the critical variety $Z$ of $G$ is a hypersurface, then $$\df^Z_x(Z)=\df_x(Z)\leq n-\ord_xZ.$$

\begin{prop}[{\cite[Propostion 4.2]{Ein1997}}] Assume that $G$ is critical at $x$ and $Z$ is the critical variety of $G$ at $x$. If $\df_x(G)\geq 1$, then $\df_x(G|_H)=\df_x(G)-1$ and $G|_H$ is critical at $x$, where $H$ is a general hypersurface in $X$ passing through $x$. In particular,  $\df_x(G)\leq \dim Z$.
\end{prop}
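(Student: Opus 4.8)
The plan is to reduce everything to a computation on one log resolution and to transport log canonical data between $(X,G+H)$ and $(H,G|_H)$ by adjunction and inversion of adjunction. Concretely, I would fix a log resolution $f\colon Y\to X$ of $G$ realizing the critical structure of Definition~\ref{critical}, chosen to factor through the blow-up of $x$, and write $K_Y-f^*(K_X+G)=\sum_i a_iF_i$. For a hypersurface $H$ that is a general member of a sufficiently positive linear system through $x$, Bertini-type arguments should give: the strict transform $\tilde H\subset Y$ is smooth, $f_H:=f|_{\tilde H}\colon \tilde H\to H$ is a log resolution of $G|_H$ with $\tilde H$ transverse to every $F_i$ and to the components of $f^*G$, and
\[ f^*H=\tilde H+\textstyle\sum_{f(F_i)=x}e_iF_i,\qquad e_i=\ord_{F_i}\bigl(f^{-1}\mathfrak m_x\cdot\O_Y\bigr), \]
the $e_i$ being the multiplicities of Definition~\ref{def:deficit}; thus a general $H$ through $x$ pulls back with the minimal multiplicities along the divisors contracted to $x$ (and $e_i=0$ exactly when $f(F_i)\neq\{x\}$).

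Adjunction gives $K_{\tilde H}=(K_Y+\tilde H)|_{\tilde H}$ and $K_H=(K_X+H)|_H$, hence
\[ K_{\tilde H/H}-f_H^{*}(G|_H)=\bigl(K_{Y/X}-f^*G-\textstyle\sum_{f(F_i)=x}e_iF_i\bigr)\big|_{\tilde H}. \]
Reading off coefficients, a divisor $\Gamma$ over $x$ on $H$ — realized on a further blow-up as the intersection of the strict transform of $\tilde H$ with a divisor $F'$ over $x$ on $X$ — satisfies $a(\Gamma;H,G|_H)=a(F';X,G)-\ord_{F'}\mathfrak m_x$ and $\ord_\Gamma\mathfrak m_{x,H}=\ord_{F'}\mathfrak m_x$, using $\ord_{F'}f^*H=\ord_{F'}\mathfrak m_x$. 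Taking the infimum of $(a+1)/e$ over such $\Gamma$, and noting that for general $H$ every divisor over $x$ relevant to the deficit survives the restriction, one gets
\[ \df_x(G|_H)=\inf_{f(F_i)=x}\frac{a(F_i;X,G)-e_i+1}{e_i}=\df_x(G)-1. \]
The hypothesis $\df_x(G)\ge 1$ enters exactly here: it forces $a(F_i;X,G)-e_i\ge -1$ for every $F_i$ contracted to $x$, so $(X,G+H)$, and hence by inversion of adjunction $(H,G|_H)$, is log canonical at $x$; the same inequality applied to $(X,(1-\varepsilon)G+H)$ shows this pair is purely log terminal near $x$ (the only discrepancy-$(-1)$ divisor near $x$ being $H$ itself), so $(H,(1-\varepsilon)G|_H)$ is klt at $x$ for all $\varepsilon>0$, giving Definition~\ref{critical}(1) for $G|_H$.

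To finish checking criticality I would restrict the decomposition $K_{Y/X}-[f^*G]=P-F-N$ to $\tilde H$. Since $\df_x(G)\ge 1>0$, Proposition~\ref{prop:Ein-Helmke} forces $\dim Z\ge 1$, so the critical component $F$ is not contracted to $x$, has coefficient $0$ in $f^*H$, and $F\cap\tilde H$ is irreducible with $f(F\cap\tilde H)=Z\cap H$ of dimension $\dim Z-1$, a log canonical place of $(H,G|_H)$. From $N\cap f^{-1}(x)=\emptyset$ one gets $N|_{\tilde H}\cap f_H^{-1}(x)=\emptyset$, and the extra terms $-\sum e_iF_i|_{\tilde H}$ are absorbed by $P|_{\tilde H}$ down to coefficient $\ge -1$ since $a(F_i;X,G)\ge e_i-1$; hence $F\cap\tilde H$ is the unique coefficient-$(-1)$ component through $x$ and $G|_H$ is critical at $x$ with critical variety $Z\cap H$, except in the boundary case $\df_x(G)=1$ (where a second coefficient-$(-1)$ component with center $\{x\}$ appears), which is dealt with by a tie-breaking perturbation and still yields $\df_x(G|_H)=0=\df_x(G)-1$. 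The bound $\df_x(G)\le\dim Z$ then follows by induction on $\dim Z$: for $\dim Z=0$ it is Proposition~\ref{prop:Ein-Helmke}(2), and for $\dim Z\ge 1$ either $\df_x(G)<1\le\dim Z$, or $\df_x(G)\ge 1$ and, by the above, $G|_H$ is critical with critical variety of dimension $\le\dim Z-1$ and deficit $\df_x(G)-1$, whence $\df_x(G)=\df_x(G|_H)+1\le\dim Z$.

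The step I expect to be the main obstacle is the genericity input: proving that a general $H$ through $x$ realizes the minimal multiplicities $e_i$ simultaneously along all divisors over $x$, equivalently that the infimum defining $\df_x(G)$ is still attained after restriction to $\tilde H$. This is delicate when the optimal divisor over $x$ lies over a single tangent direction at $x$ that a general $H$ misses, so the Bertini arguments and the choice of $H$ must be arranged with care; the boundary case $\df_x(G)=1$ in the criticality verification is a secondary technical point.
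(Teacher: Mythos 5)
The paper itself offers no proof of this proposition (it is quoted from Ein's Santa Cruz notes), so I am judging your argument on its own merits. Your skeleton --- the adjunction identity $K_{\tilde H/H}-f_H^{*}(G|_H)=\bigl(K_{Y/X}-f^*G-\sum e_iF_i\bigr)\big|_{\tilde H}$ on a log resolution factoring through the blow-up of $x$, Bertini for a general $H$ in a sufficiently positive system with base ideal $\mathfrak m_x$, and the identification $\ord_{F_i\cap\tilde H}\mathfrak m_{x,H}=e_i$ --- is the right one, and it proves \emph{unconditionally} the inequality $\df_x(G|_H)\ge\df_x(G)-1$: for general $H$ the divisors on $\tilde H$ lying over $x$ are exactly the components of the $F_i\cap\tilde H$, each contributing $(a_i+1)/e_i-1$, so the infimum computed on $\tilde H$ runs over a \emph{subset} of the original index set shifted by $1$. (This is also the only direction used for the ``in particular'' clause and for every application of the proposition in this paper.) The genuine gap is exactly the one you flag at the end: the reverse inequality needs some $F_{i_0}$ achieving $\df_x(G)=\inf(a_i+1)/e_i$ to actually meet $\tilde H$, and this can fail for the reason you give --- since $\tilde H$ coincides with $g^*$ of the strict transform of $H$ on the blow-up of $x$, one has $\tilde H\cap F_{i_0}=\emptyset$ whenever $g(F_{i_0})$ is a single point of the exceptional $\mathbb P^{n-1}$ (a single tangent direction) missed by the general hyperplane section. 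No refinement of the Bertini argument removes this, because the offending $F_{i_0}$ is fixed before $H$ is chosen.

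The way to close the gap is not a better choice of $H$ but the dual description of the deficit recorded in the Remark after Definition \ref{def:deficit}, namely $\df_x(G)=\sup\{\ord_xD : (X,G+D)\ \text{lc at}\ x\}$. Given $B\ge0$ on $H$ with $(H,G|_H+B)$ lc at $x$, lift a local equation of $B$ from $\mathfrak m_{x,H}^{\,\ord_xB}$ to $\mathfrak m_{x,X}^{\,\ord_xB}$ to obtain $D$ on $X$ with $D|_H=B$ and $\ord_xD\ge\ord_xB$; inversion of adjunction makes $(X,G+H+D)$ lc near $x$, whence $1+\ord_xB\le\ord_x(H+D)\le\df_x(G)$. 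Taking the supremum over $B$ gives $\df_x(G|_H)\le\df_x(G)-1$, and combined with your restriction computation this yields the equality (and shows, a posteriori, that a minimizing $F_{i_0}$ always does survive the restriction). Your remaining points stand modulo the caveats you already note: in the boundary case $\df_x(G)=1$ the restricted pair may acquire a second lc place over $x$, so ``critical'' holds only after tie-breaking; and the induction for $\df_x(G)\le\dim Z$ goes through because it uses only the ``$\ge$'' half together with the inductive bound $\df_x(G|_H)\le\dim(Z\cap H)=\dim Z-1$.
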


We also note that an implicit upper bound is given by the inequality in the following theorem.
\begin{theorem}[{\cite[Theorem 4.3]{Helmke1997}}]\label{Thm:boundofmult}
 Assume that $G$ is critical at $x$ and $Z$ is the critical variety of $G$ at $x$. Let $e$ be the embedding dimension of $Z$ at $x$, $d=\dim Z$ and $m=\mult_xZ$. Then
  \[m\leq \binom{e -\ceil{\df_x(G)}}{e-d}.\]
  In particular, if $d\geq 1$, then \[m\leq \binom{e -1}{d-1}.\] \end{theorem}

\begin{rmk}Helmke (Example 3.5 \cite{Helmke1999}) shows that when the critical variety $Z$ is a hypersurface, then $\df_x(Z)=\dim X-\mult_xZ$ which shows that the inequality in the above theorem is optimal.
\end{rmk}

Inspired by Theorem \ref{Thm:boundofmult}, we define an integer $\alpha_{d, e}(m)$ as follows and show that it is an upper bound of the deficit function.

\begin{Def}\label{Def:alpha}
Given positive integers $e, d, m$ satisfying $d\leq e$ and  $m\leq \binom{e}{d}$, we define $\alpha_{d,e}(m)$ to be the largest integer $y$ such that $\binom{e -y}{e-d}\geq m$. 
\end{Def}
Note that $0\leq \alpha_{d,e}(m)\leq d$. 

\begin{cor}\label{deflesalpha}
Assume that $G$ is critical at $x$ and $Z$ is the critical variety of $G$ at $x$. Let $e$ be the embedding dimension of $Z$ at $x$, $d=\dim Z$ and $m=\mult_xZ$. Then $\df_x(G)\leq \alpha_{d, e}(m)$.
\end{cor}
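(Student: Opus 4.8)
The plan is to combine Theorem \ref{Thm:boundofmult} with the integrality of the deficit bound coming from Helmke's earlier results. First I would recall that $\df_x(G) \leq \dim Z = d$ (the last proposition before Definition \ref{Def:alpha}), so it suffices to bound $\df_x(G)$ by $\beta_{d,e}(m)$ and then round down. By Theorem \ref{Thm:boundofmult} we have the inequality
\[
m = \mult_x Z \leq \binom{e - \ceil{\df_x(G)}}{e-d}.
\]
The key monotonicity observation is that the function $y \mapsto \binom{e-y}{e-d}$ is (weakly) decreasing in $y$ on the relevant range $y \leq d$: each factor $e-y, e-y-1, \dots, d-y+1$ of the numerator is nonnegative and decreases as $y$ increases. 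Hence the equation $\binom{e-y}{e-d} = m$ has a largest solution $\beta_{d,e}(m)$, and the inequality $\binom{e-\ceil{\df_x(G)}}{e-d} \geq m = \binom{e-\beta_{d,e}(m)}{e-d}$ forces $\ceil{\df_x(G)} \leq \beta_{d,e}(m)$, provided we check that $\ceil{\df_x(G)}$ itself lies in the range where monotonicity applies; here one uses $\df_x(G) \leq d$ so that $\ceil{\df_x(G)} \leq d$ (or handles the boundary case $\ceil{\df_x(G)} = d$, where $m=1$, separately).

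From $\ceil{\df_x(G)} \leq \beta_{d,e}(m)$ I would then conclude. Since $\ceil{\df_x(G)}$ is an integer, $\ceil{\df_x(G)} \leq \beta_{d,e}(m)$ implies $\ceil{\df_x(G)} \leq \floor{\beta_{d,e}(m)} = \alpha_{d,e}(m)$ by Definition \ref{Def:alpha}. Finally $\df_x(G) \leq \ceil{\df_x(G)} \leq \alpha_{d,e}(m)$, which is exactly the claim.

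The step I expect to require the most care is the monotonicity and well-definedness argument: one must make sure that $\binom{e-y}{e-d}$ is interpreted consistently for real (not just integer) $y$, that it is genuinely decreasing rather than merely non-increasing on the interval in play so that "largest solution" is meaningful, and that the comparison $\binom{e-\ceil{\df_x(G)}}{e-d}\geq \binom{e-\beta_{d,e}(m)}{e-d}$ legitimately reverses to give $\ceil{\df_x(G)} \leq \beta_{d,e}(m)$. The potential edge cases — $e = d$ (a hypersurface, where $m=1$ and $\beta = d$ and the earlier remark pins down $\df_x(Z)$ exactly), and small values of $\df_x(G)$ near $0$ — should be dispatched directly, noting that $\alpha_{d,e}(m) \geq 0$ always holds so the bound is never vacuous in the wrong direction.
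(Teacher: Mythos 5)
Your argument is correct and is essentially the paper's own proof, just written out in more detail: the paper likewise notes that $y\mapsto\binom{e-y}{e-d}$ is decreasing and combines this with Theorem \ref{Thm:boundofmult} to conclude $\df_x(G)\leq\alpha_{d,e}(m)$. Your extra care about the range of monotonicity, the passage from $\ceil{\df_x(G)}\leq\beta_{d,e}(m)$ to $\ceil{\df_x(G)}\leq\alpha_{d,e}(m)$ via integrality, and the boundary case $e=d$ only fills in steps the paper leaves implicit.
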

\begin{proof}
We note that \[\binom{e -y}{ e-d}-m\]
is a decreasing function of $y$ over the interval $[0, d]$. Therefore, by Theorem \ref{Thm:boundofmult}, we see that  \[\df_x(G)\leq \alpha_{d, e}(m).\]
\end{proof}

It is clear that $\alpha_{d,e}(m)\leq d-1$ if $m=\mult_xZ\geq 2$.  In fact, we know that the larger the number $m$ the smaller the integer $\alpha_{d,e}(m)$.

Corollary \ref{deflesalpha} together with Proposition \ref{prop:Ein-Helmke} \eqref{prop:Ein-Helmke-1} implies one of the practically useful results of the paper.

\begin{lemma}\label{main-lemma}
Let $X$ be a smooth projective variety of dimension $n$ and $x\in X$ a point. Assume that $L$ is an ample line bundle on $X$. Let $G$ be an effective $\mathbb{Q}$-divisor linearly equivalent to $\lambda L$ for $\lambda<1$ and critical at $x$. 

If $\ord_xG=\lambda\sigma$ for some $\sigma>n$, then we have
\[ \dfrac{\df_x(G)}{1-\lambda}\leq \dfrac{\sigma \alpha_{d, e}(m)}{\sigma-n+\alpha_{d, e}(m)},\]
where $m=\mult_xZ(G)$.

Moreover, if $L^n>\sigma_n^n\geq n^n$ for some positive number $\sigma_n$, then there exists an effective $\mathbb{Q}$-divisor $G''$ linearly equivalent to $\lambda L$ for some number $\lambda<1$ such that $G''$ is critical at $x$,  $\ord_x G''=\lambda\sigma$ with $\sigma>\sigma_n$, and
\[ \dfrac{\df_x(G'')}{1-\lambda}< \dfrac{\sigma_n \alpha_{d, e}(m)}{\sigma_n-n+\alpha_{d, e}(m)}.\]
\end{lemma}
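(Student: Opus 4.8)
The plan is to dispatch the two assertions in turn. The inequality is nothing more than the two a priori bounds on the deficit recorded above — $\df_x(G)\le\alpha_{d,e}(m)$ (Corollary \ref{deflesalpha}) and $\df_x(G)\le n-\ord_xG$ (Proposition \ref{prop:Ein-Helmke} \eqref{prop:Ein-Helmke-1}) — combined with one elementary monotonicity remark. For the existence statement I would run a Riemann--Roch dimension count to produce a very singular divisor, rescale it to its log canonical threshold, apply the tie-breaking trick, and then feed the resulting critical divisor into the inequality just proved with a value of $\sigma$ slightly exceeding $\sigma_n$.

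\emph{The inequality.} Since $G$ is critical at $x$ it is log canonical there, so $\df_x(G)\le\alpha_{d,e}(m)$, and the hypothesis $\ord_xG=\lambda\sigma$ gives $\df_x(G)\le n-\lambda\sigma$. Rearranging this second bound and using $\sigma>n$, $\df_x(G)\ge0$,
\[1-\lambda\ \ge\ \frac{\sigma-n+\df_x(G)}{\sigma}\ >\ 0,\]
whence $\df_x(G)/(1-\lambda)\le\sigma\,\df_x(G)/(\sigma-n+\df_x(G))$. Since $t\mapsto\sigma t/(\sigma-n+t)$ has derivative $\sigma(\sigma-n)/(\sigma-n+t)^2>0$, it is increasing for $t\ge0$, so replacing $\df_x(G)$ by the larger number $\alpha_{d,e}(m)$ only increases the right-hand side; this is the asserted bound.

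\emph{Constructing a good $G$.} Assume $L^n>\sigma_n^n\ge n^n$ and fix a rational $\sigma'$ with $\sigma_n<\sigma'<\sqrt[n]{L^n}$. For $k\gg0$, asymptotic Riemann--Roch gives $h^0(X,kL)=\tfrac{L^n}{n!}k^n+O(k^{n-1})$, which, since $(\sigma')^n<L^n$, eventually exceeds the number $\binom{\ceil{k\sigma'}-1+n}{n}=\tfrac{(\sigma')^n}{n!}k^n+O(k^{n-1})$ of linear conditions imposing multiplicity $\ceil{k\sigma'}$ at $x$; taking $G_0=\tfrac{1}{k}A$ for a suitable reduced divisor $A\in|kL|$ yields an effective $\mb{Q}$-divisor $G_0$ linearly equivalent to $L$ with $\ord_xG_0\ge\sigma'>\sigma_n\ge n$. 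Because the discrepancy of the exceptional divisor of the blow-up of $x$ drops to $-1$ as soon as $c\,\mult_xG_0\ge n$, the log canonical threshold satisfies $c:=\lct_x(G_0)\le n/\ord_xG_0\le n/\sigma'<1$; thus $(X,cG_0)$ is log canonical at $x$, with $x$ in its non-klt locus, and (as $A$ is reduced, all coefficients of $cG_0$ being $<1$) that locus has codimension $\ge2$ in $X$.

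\emph{Passing to a critical divisor and concluding.} By the tie-breaking trick (Remark \ref{tie-breaking}), $cG_0$ can be replaced by a $\mb{Q}$-divisor $G$ linearly equivalent to $\lambda L$ that is critical at $x$, with $\lambda$ arbitrarily close to $c$ — in particular $\lambda<1$ — and with critical variety $Z(G)$ the minimal log canonical center of $(X,cG_0)$ at $x$, so $d:=\dim Z(G)\le n-2$. Choosing the perturbation small enough keeps $\sigma:=\ord_xG/\lambda$ arbitrarily close to $\ord_xG_0\ (\ge\sigma')$, hence $\sigma>\sigma_n\ge n$. We may assume $d\ge1$: otherwise $\df_x(G)=0$ and $\O_X(K_X+L)$ is already globally generated at $x$ by Lemma \ref{cor:0-dimb-p-f}. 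Then $\df_x(G)>0$ by Proposition \ref{prop:Ein-Helmke}, so the integer $\alpha_{d,e}(m)\ge\df_x(G)$ satisfies $1\le\alpha_{d,e}(m)\le d\le n-2$. Feeding $G$ and this $\sigma$ into the inequality above gives $\df_x(G)/(1-\lambda)\le\sigma\,\alpha_{d,e}(m)/(\sigma-n+\alpha_{d,e}(m))$ with $m=\mult_xZ(G)$; since $1\le\alpha_{d,e}(m)<n$, the map $t\mapsto t\,\alpha_{d,e}(m)/(t-n+\alpha_{d,e}(m))$ is strictly decreasing, and $\sigma>\sigma_n$ then yields $\df_x(G)/(1-\lambda)<\sigma_n\,\alpha_{d,e}(m)/(\sigma_n-n+\alpha_{d,e}(m))$, as required. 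I expect the one genuinely delicate point to be controlling $\ord_xG/\lambda$ across the tie-breaking step: one must make sure the perturbation is small enough not to destroy the high vanishing at $x$ while still making $G$ critical with critical variety pinned to the minimal log canonical center. Granting that, the remaining ingredients — the two deficit bounds and the two monotonicity computations — are routine.
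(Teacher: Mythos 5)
Your proof is correct and follows essentially the same route as the paper: the first inequality combines the same two deficit bounds ($\df_x(G)\le\alpha_{d,e}(m)$ and $\df_x(G)\le n-\ord_xG$), and the existence part uses the same Riemann--Roch construction, rescaling to the log canonical threshold, tie-breaking, and monotonicity of $\sigma\mapsto\sigma\alpha/(\sigma-n+\alpha)$. Your single monotonicity argument in $t\mapsto\sigma t/(\sigma-n+t)$ is a slightly cleaner packaging of the paper's two-case analysis, but it is the same underlying computation.
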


\begin{proof}
By Proposition \ref{prop:Ein-Helmke} and Corollary \ref{deflesalpha}, we have the following inequality
\[\df_x(G)\leq\min\{\alpha_{d, e}(m), n-\ord_xG\}.\]
We argue by comparing $\alpha_{d, e}(m)$ with $n-\ord_xG$.
\begin{enumerate}[label= Case \arabic*.]
\item 
If  $\alpha_{d, e}(m)\leq n-\ord_xG= n-\lambda \sigma$, then
\[1-\lambda\geq \dfrac{\sigma-n+\alpha_{d, e}(m)}{\sigma}.\] Therefore,
\[\dfrac{\df_x(G)}{1-\lambda}\leq \dfrac{\alpha_{d, e}(m)}{1-\lambda}\leq \dfrac{\sigma\alpha_{d, e}(m)}{\sigma-n+\alpha_{d, e}(m)}.\]
\item If  $\alpha_{d, e}(m)\geq n-\ord_xG= n-\lambda \sigma$,  then $\lambda\geq \frac{n-\alpha_{d, e}(m)}{\sigma}$. Therefore,
\[\scriptstyle \dfrac{\df_x(G)}{1-\lambda}\leq \dfrac{n-\lambda\sigma}{1-\lambda} \leq \dfrac{~n-\sigma\dfrac{n-\alpha_{d, e}(m)}{\sigma}~}{~1-\dfrac{n-\alpha_{d, e}(m)}{\sigma}~}=\dfrac{\sigma\alpha_{d, e}(m)}{\sigma-n+\alpha_{d, e}(m)}.\]
\end{enumerate}

Now we prove the second inequality in Lemma \ref{main-lemma}. By the assumption $L^n>\sigma_n^n\geq n^n$, we may find a positive number $\sigma'$ such that $\sqrt[n]{L^n}>\sigma'>\sigma_n$. Since $L^n>(\sigma')^n$, {by \cite[Proposition 1.1.31]{Lazarsfeld2004},} there is an effective $\mathbb{Q}$-divisor $D$ linearly equivalent to $L$ such that $\ord_xD>\sigma'$. Let $c$ be the log canonical threshold $\lct_x(X,D)$ at $x$ and $G'=cD$. By applying the tie-breaking technique to $G'$, we may find a $\mathbb{Q}$-divisor $G''$ linearly equivalent to $\lambda L$ with $|\lambda-c|\ll 1$ such that $G''$ is critical at $x$ and $\ord_x G''=\lambda\sigma$ with $\sigma>\sigma_n$. Since $\frac{\sigma\alpha_{d, e}(m)}{\sigma-n+\alpha_{d, e}(m)}$ is a decreasing function of $\sigma$, we have the strict inequality
\[ \dfrac{\df_x(G'')}{1-\lambda}\leq\dfrac{\sigma \alpha_{d, e}(m)}{\sigma-n+\alpha_{d, e}(m)}< \dfrac{\sigma_n \alpha_{d, e}(m)}{\sigma_n-n+\alpha_{d, e}(m)}.\]
\end{proof}

The same idea used in the above proof together with Lemma \ref{lemma:Ein-Siu} leads to the following result {which will be used in case (4) in the proof of Theorem \ref{Thm-dim-5}.} 

\begin{prop} \label{smoothdivisorreduction}
Let $G''$ be the effective divisor and $\sigma_n$ be the positive number in Lemma \ref{main-lemma}.  Assume in addition that the critical variety $Z(G'')$ is smooth at $x$ and $L^d\cdot Z( G'')\geq \sigma_n^d$,  where $d=\dim Z(G'')$. Then there is an effective divisor $G_1$ linearly equivalent to $\lambda_1L$ with $\lambda_1<1$ such that $G_1$ is critical at $x$ and $d'=\dim Z(G_1)<\dim Z(G)$. Moreover, 
\[\frac{\df_x(G_1)}{1-\lambda_1}< \frac{\sigma_n\alpha'}{\sigma_n-n+\alpha'},\] where $m'=\mult_xZ(G_1)$ and $\alpha'=\alpha_{d', e'}(m')$.
\end{prop}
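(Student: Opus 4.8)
The plan is to perform one ``second step'' reduction with Lemma \ref{lemma:Ein-Siu}, feeding it a divisor on $Z(G)$ that is the restriction of a divisor on $X$ of \emph{large} multiplicity at $x$; that large multiplicity is precisely what forces the new divisor $G_1$ to satisfy $\ord_xG_1=\lambda_1\sigma_1$ with $\sigma_1>\sigma_n$, so that the estimate of Lemma \ref{main-lemma} applied to $G_1$ yields the desired strict bound. I would begin by using smoothness: since $Z:=Z(G)$ is smooth of dimension $d$ at $x$, its embedding dimension at $x$ equals $d$ and $\mult_xZ=1$, so $\alpha_{d,e}(\mult_xZ)=\alpha_{d,d}(1)=d$; hence Lemma \ref{main-lemma} (recall $\sigma_n\ge n$) gives
\[\frac{\df_x(G)}{1-\lambda}<\frac{\sigma_n d}{\sigma_n-n+d}\le\sigma_n .\]
In particular $\df_x(G)<(1-\lambda)\sigma_n$, so I may fix a rational number $q$ with $\df_x(G)/\sigma_n<q<1-\lambda$.

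Next I would construct the auxiliary divisor. Using the hypotheses $L^n>\sigma_n^n$ and $L^d\cdot Z\ge\sigma_n^d$ together with the smoothness of $Z$ at $x$, a Riemann--Roch count on $X$ and on $Z$ at the smooth point $x$ (in the spirit of the ``Moreover'' part of Lemma \ref{main-lemma}) yields an effective $\mathbb{Q}$-divisor $D\sim qL$ on $X$ that does not contain $Z$ and has $\ord_xD>q\sigma_n$. Set $B:=D|_Z$, an effective $\mathbb{Q}$-divisor on $Z$ linearly equivalent to $qL|_Z$ with $\ord_xB\ge\ord_xD>q\sigma_n>\df_x(G)$. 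Applying Lemma \ref{lemma:Ein-Siu} to $G$, $Z$, $B$ (with this $D$ as the extension of $B$) gives $G_1=(1-t)G+D'$, $D'=s'D+\delta H$, with $H\in|L|$ through $x$ and $t,\delta,\,s-s'$ small positive, $0<s\le1$; then $G_1$ is critical at $x$ with critical variety $Z':=Z(G_1)\subsetneq Z$, and $G_1\sim\lambda_1L$ with $\lambda_1=(1-t)\lambda+s'q+\delta<1$ for $t,\delta$ small (as $\lambda+q<1$). We may assume $d':=\dim Z'\ge1$: if $d'=0$ then $\O_X(K_X+L)$ is already globally generated at $x$ by Lemma \ref{cor:0-dimb-p-f} (since $L-G_1$ is ample), and there is nothing further to prove.

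Now I would carry out the key estimate. Since $\ord_xD>q\sigma_n$ and $H$ passes through $x$,
\[\ord_xG_1=(1-t)\lambda\sigma+s'\ord_xD+\delta\ord_xH>(1-t)\lambda\sigma+s'q\sigma_n+\delta ,\]
so, writing $\sigma_1:=\ord_xG_1/\lambda_1$, we get $\lambda_1(\sigma_1-\sigma_n)>(1-t)\lambda(\sigma-\sigma_n)+\delta(1-\sigma_n)$, which is positive for $t,\delta$ small because the divisor $G$ of Lemma \ref{main-lemma} has $\ord_xG=\lambda\sigma$ with $\sigma>\sigma_n$; hence $\sigma_1>\sigma_n\ge n$. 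Lemma \ref{main-lemma} applied to $G_1$ (critical at $x$, $\sim\lambda_1L$ with $\lambda_1<1$, and $\ord_xG_1=\lambda_1\sigma_1$ with $\sigma_1>n$) then yields
\[\frac{\df_x(G_1)}{1-\lambda_1}\le\frac{\sigma_1\alpha'}{\sigma_1-n+\alpha'},\]
where $\alpha'=\alpha_{d',e'}(m')$, $m'=\mult_xZ'$, and $e'$ is the embedding dimension of $Z'$ at $x$. Since $d'\ge1$ forces $\alpha'\ge1$ (otherwise $\df_x(G_1)\le\alpha'=0$ would make $Z'$ zero-dimensional, by Corollary \ref{deflesalpha} and Proposition \ref{prop:Ein-Helmke}) and $\alpha'\le d'<d<n$, the function $\sigma\mapsto\sigma\alpha'/(\sigma-n+\alpha')$ is strictly decreasing on $(n-\alpha',\infty)$; as $n-\alpha'<n\le\sigma_n<\sigma_1$, we conclude $\dfrac{\df_x(G_1)}{1-\lambda_1}<\dfrac{\sigma_n\alpha'}{\sigma_n-n+\alpha'}$, as required.

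The main obstacle is the construction in the second paragraph: producing an effective $D\sim qL$ on $X$ that simultaneously does not contain $Z$ and has $\ord_xD>q\sigma_n$ (the requirement $\ord_xB>\df_x(G)$ of Lemma \ref{lemma:Ein-Siu} then being automatic, since $\ord_xB\ge\ord_xD$). This is the Riemann--Roch heart of the proof, and it is here that all three hypotheses $L^n>\sigma_n^n$, $L^d\cdot Z(G)\ge\sigma_n^d$ and the smoothness of $Z(G)$ at $x$ enter: one must exhibit sections of $kqL$ ($k\gg0$) with the prescribed vanishing at $x$ that do not all vanish along $Z(G)$, using that $Z(G)$ is smooth at $x$ of dimension $d$ with $(L|_{Z(G)})^d=L^d\cdot Z(G)\ge\sigma_n^d$. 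A secondary point to verify is that the construction in Lemma \ref{lemma:Ein-Siu} is compatible with taking this particular $D$; this should be harmless, as the only quantity from $D$ entering its conclusion, $\ord_xD'|_Z$, depends only on $B$, and the summand $\delta H$ supplies whatever genericity is needed.
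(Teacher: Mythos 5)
Your overall architecture (one Ein--Siu reduction step followed by the computation of Lemma \ref{main-lemma}) matches the paper's, but the construction of the auxiliary divisor $D$ in your second paragraph is a genuine gap, and the way you subsequently use $D$ --- through its multiplicity $\ord_xD$ \emph{on $X$} --- is not a quantity the hypotheses let you control. Concretely: (a) you need an effective $D\sim qL$ with $\ord_xD>q\sigma_n$ on $X$ \emph{and} $Z\not\subset\textup{Supp}(D)$. A Riemann--Roch count on $X$ (using $L^n>\sigma_n^n$) produces divisors of that multiplicity but gives no control over whether every such divisor contains $Z$; a Riemann--Roch count on $Z$ (using $L^d\cdot Z\geq\sigma_n^d$ and smoothness at $x$) produces a divisor $B$ on $Z$ with large $\ord_xB$, and Lemma \ref{lemma:Ein-Siu} lifts it to some $D$ with $D|_Z=B$, but that lemma only controls $\ord_x(D|_Z)$, not $\ord_xD$: the lift may have small multiplicity at $x$ as a divisor on $X$, and then your key estimate $\ord_xG_1>\lambda_1\sigma_n$ fails, so you cannot feed $G_1$ back into Lemma \ref{main-lemma} via Proposition \ref{prop:Ein-Helmke}(1). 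Saying ``a Riemann--Roch count on $X$ and on $Z$'' does not explain how to combine the two, and I do not see how to. (b) Even on $Z$ the hypothesis is the non-strict $L^d\cdot Z\geq\sigma_n^d$, so one can only achieve $\ord_xB\geq q(\sigma_n-\varepsilon)$ for small $\varepsilon>0$, never the strict $\ord_xB>q\sigma_n$ you assert.

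The paper's proof repairs exactly these two points. It takes $D\sim(1-\lambda)L$ with $\ord_xD|_Z=(1-\lambda)(\sigma_n-\varepsilon_1)$ (only the \emph{restricted} order is prescribed, and only up to an $\varepsilon_1$ loss), checks $\ord_xD|_Z>\df_x(G)$ using $\df_x(G)\leq n-\lambda\sigma$ with $\sigma>\sigma_n$, and then, instead of bounding $\df_x(G_1)$ by $n-\ord_xG_1$, invokes the ``Moreover'' clause of Lemma \ref{lemma:Ein-Siu}:
\[\df_x(G_1)\ \leq\ \df_x((1-t)G)-\ord_xG'|_Z\ \leq\ n-(1-t)\ord_xG-\ord_xG'|_Z+\varepsilon_2 .\]
This bound involves only the restricted order $\ord_xG'|_Z$, which is what $L^d\cdot Z\geq\sigma_n^d$ actually controls; the strict slack $\lambda(\sigma-\sigma_n)>0$ supplied by Lemma \ref{main-lemma} is what absorbs the losses $\varepsilon_1,\varepsilon_2,\delta$ and yields $\df_x(G_1)<n-\lambda_1(\sigma_n+\varepsilon_1)$, after which the case analysis of Lemma \ref{main-lemma} gives the stated strict inequality. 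Your first and third paragraphs (the choice of $q<1-\lambda$, the reduction to $d'\geq1$, and the monotonicity of $\sigma\mapsto\sigma\alpha'/(\sigma-n+\alpha')$) are fine, but the argument as written does not close without replacing the construction and the use of $\ord_xD$ by the restricted-order bookkeeping above.
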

\begin{proof} By Lemma \ref{main-lemma}, we know that 
\[\dfrac{\df_x(G'')}{1-\lambda}< \dfrac{\sigma_n \alpha_{d, e}(m)}{\sigma_n-n+\alpha_{d, e}(m)}\leq \sigma_n,\]

Hence $\sigma_n(1-\lambda)>\df_x(G'')$. Let $\varepsilon_1$ be a sufficiently small positive number such that $\varepsilon_1<\min\{1, \frac{\lambda(\sigma-\sigma_n)}{{8}}, \sigma_n-\frac{\df(G'')}{1-\lambda}\}$. Since $L^d\cdot Z( G'')\geq \sigma_n^d$, there exists an effective  $\mathbb{Q}$-divisor $B$ on $Z$ linearly equivalent to $(1-\lambda)L|_Z$ such that \[\ord_xB=(1-\lambda)(\sigma_n-\varepsilon_1)>\df_x(G'').\]

As a result, we can apply Lemma \ref{lemma:Ein-Siu} to $B$ and $G''$ and get a new divisor $G_1=(1-t){G''}+D'$ with $\ord_xD'|_Z\geq s'(1-\lambda)(\sigma_n-\varepsilon_1)+\delta_0$ for some real numbers $0<t<1$, $0<s'<1$ and nonnegative $\delta_0$. In particular, $G_1$ is critical at $x$ with $\dim Z(G_1)<\dim Z( G'')$. Moreover, $G_1$ is linearly equivalent to $\lambda_1 L$ with $\lambda_1=(1-t)\lambda+ s'(1-\lambda)+\delta<1$. The value of $t$ can be arbitrarily small as long as $\delta$ is sufficiently small.  We can then choose {$\delta< \frac{\lambda(\sigma-\sigma_n)}{4(\sigma_n+1)}$} such that $t<\min\{\frac{1}{2}, \lambda\}$ and $\df_x((1-t)G'')-\df_x(G'')<\frac{\varepsilon_1}{2}$. Let $\varepsilon_2$ be a sufficiently small positive number such that $\df_x((1-t)G'')-\df_x( G'')<\varepsilon_2<\frac{\varepsilon_1}{2}$.
Recall that  $$\df_x(G_1)\leq \min\{\alpha', \df_x( G'')+\varepsilon_2-\ord_xD'|_Z\},$$ where  $m'=\mult_xZ(G_1)$ and  $\alpha'=\alpha_{d', e'}(m')$. Notice that 
\[\scriptstyle\begin{split}
~&~\df_x(G'')+\varepsilon_2-\ord_xD'|_Z\\
\leq ~&~ \df_x((1-t)G'')+\varepsilon_2-\ord_xD'|_Z\\ 
\leq ~&~ n-(1-t)\ord_xG''-\ord_xD'|_Z+\varepsilon_2\\ 
\leq ~&~ n- (1-t)\lambda\sigma-\big(s'(1-\lambda)(\sigma_n-\varepsilon_1)+\delta_0\big)+\varepsilon_2\\
= ~&~ n-(1-t)\lambda\sigma_n-(1-t)\lambda(\sigma-\sigma_n)-\big(s'(1-\lambda)(\sigma_n-\varepsilon_1)+\delta_0\big)+\varepsilon_2\\
< ~&~ n- (1-t)(\lambda\sigma_n+4\varepsilon_1)-\frac{1}{2}(1-t)\lambda(\sigma-\sigma_n)-\big(s'(1-\lambda)(\sigma_n-\varepsilon_1)+\delta_0\big)+\varepsilon_2\\
~&\hcell{\hfill \text{~(because~$ \varepsilon_1<\tfrac{\lambda(\sigma-\sigma_n)}{8}$)} }\\
<~&~ n-\lambda_1(\sigma_n+\varepsilon_1)+\delta(\sigma_n+\varepsilon_1)-\frac{1}{4}\lambda(\sigma-\sigma_n)\\
\phantom{=} ~&~  +\big((1-t)\lambda+2s'(1-\lambda)-4(1-t)\big)\varepsilon_1-\delta_0+\varepsilon_2  \\
~&\hcell{\hfill \text{~(because~ $1-t>\tfrac12$})}\\
<~&~n-\lambda_1(\sigma_n+\varepsilon_1)+\big((1-t)\lambda+2s'(1-\lambda)-4(1-t)\big)\varepsilon_1-\delta_0+\varepsilon_2  \\
~&\hcell{\hfill ~\text{(because ~ $\delta(\sigma_n+\varepsilon_1)<\delta(\sigma_n+1)<\tfrac{\lambda(\sigma-\sigma_n)}{4}$)} }\\
<~&~n-\lambda_1(\sigma_n+\varepsilon_1)-(1-t)\varepsilon_1-\delta_0+\varepsilon_2\\
~&\hcell{\hfill  ~(\text{because ~} t<\lambda<1 \text{~and~} s'<1)}\\
= ~&~ n- \lambda_1(\sigma_n+\varepsilon_1)-\big( (1-t)\varepsilon_1-\varepsilon_2\big)-\delta_0\\
< ~&~ n- \lambda_1(\sigma_n+\varepsilon_1)-\big(\frac12\varepsilon_1-\varepsilon_2\big)\\
< ~&~ n- \lambda_1(\sigma_n+\varepsilon_1).
\end{split}
\] 
Hence, $\df_x(G_1)\leq \min\{\alpha', n-\lambda_1(\sigma_n+\varepsilon_1)\}$. Similar to the proof of Lemma \ref{main-lemma}, we argue by comparing $\alpha'$ with $n-\lambda_1(\sigma_n+\varepsilon_1)$. 
\begin{enumerate}[label= Case \arabic*.]
\item 
If  $\alpha'\leq n-\lambda_1(\sigma_n+\varepsilon_1) $, then $\lambda_1\leq \frac{n-\alpha'}{\sigma_n+\varepsilon_1}$. Hence \[\dfrac1{1-
\lambda_1}\leq \dfrac{\sigma_n+\varepsilon_1}{\sigma_n+\varepsilon_1-n+\alpha'}\] and
\[\dfrac{\df_x(G_1)}{1-\lambda_1}\leq \dfrac{\alpha'}{1-\lambda_1}\leq \dfrac{\alpha'(\sigma_n+\varepsilon_1)}{\sigma_n+\varepsilon_1-n+\alpha'}<\dfrac{\alpha'\sigma_n}{\sigma_n-n+\alpha'}.\]
The last inequality follows from the fact that the function $f(x)=\frac{\alpha' x}{x-n+\alpha'}$ is decreasing.
\item If  $\alpha'\geq n-\lambda_1(\sigma_n+\varepsilon_1) $, then $\lambda_1\geq \tfrac{n-\alpha'}{\sigma_n+\varepsilon_1}$. Therefore, \[\begin{split}\dfrac{\df_x(G_1)}{1-\lambda_1}\leq \dfrac{n-\lambda_1(\sigma_n+\varepsilon_1)}{1-\lambda_1}\leq&
\dfrac{n-\dfrac{n-\alpha'}{\sigma_n+\varepsilon_1}(\sigma_n+\varepsilon_1)}{1-\dfrac{n-\alpha'}{\sigma_n+\varepsilon_1}}\\
=&
\dfrac{\alpha'(\sigma_n+\varepsilon_1)}{\sigma_n+\varepsilon_1-n+\alpha'}<\dfrac{\alpha'\sigma_n}{\sigma_n-n+\alpha'}.\end{split}\]
The second inequality follows from the fact that the function $f(x)=\frac{n-x(\sigma_n+\varepsilon_1)}{1-x}$ is decreasing. 
The last inequality follows from the fact that the function $f(x)=\frac{\alpha' x}{x-n+\alpha'}$ is decreasing.
\end{enumerate}
\end{proof}

It will be very helpful to know the singularities of $Z$, especially the multiplicity $\mult_xZ$. In \cite{Kawamata1997}, Kawamata initiated the study of subadjucntion formulas.  For critical varieties, now we have the following characterization on their singularities.

\begin{theorem}[\cite{Fujino2012}]\label{thm:Kawamata1998}
 Assume that $G$ is critical at $x$ with the critical variety $Z$.  Then there exist an effective $\mb{Q}$-divisor $D_Z$ on $Z$ such that $$(K_X+D)|_Z=K_Z+D_Z$$ and the pair $(W, D_W)$ is klt at $x$. In particular, $Z$ has at most a rational singularity at $x$.
\end{theorem}

\begin{rmk}\label{rmk:embedding-dim-surface-sing}
Let $Z$ be a critical variety and $x\in Z$ be a point. Assume that $\dim Z=2$. Since $x$ is a rational singularity,  then $\mult_xZ=e-1$, where $e$ is the embedding dimension of $Z$ at $x$.
\end{rmk}

\section{Volumes of graded linear series and applications \label{Sec-volume}}
Let $X$ be a smooth projective variety of dimension $n$,  $\mathfrak{m}_x$ be the maximal ideal of a closed point $x\in X$ and $L$ be an ample line bundle on $X$. Let $q$ be a nonnegative real number. Then $A^{q k}_k : =\{H^0(X, kL\otimes \mathfrak{m}_x^{\ceil{q k}})\}$ is a graded sublinear series. We define the {\it volume function} of this sublinear series as $\vol(q,L):=\lim\limits_k\{\frac{n!}{k^n}\dim A^{q k}_k\}$. 
For any two real numbers $\beta\leq \gamma$, we write $\vol(\beta,\gamma,L)$ for the difference $\vol(\beta,L)-\vol(\gamma,L)$. 
We denote by $F_k(q)$ the fixed part of $|kL\otimes \mathfrak{m}_x^{\ceil{kq}}|$ and define $\phi_k(q)=q-\frac{\ord_x F_k(q)}{k}$ if $|kL\otimes \mathfrak{m}_x^{\ceil{kq}}|\neq\emptyset$ and $\phi_k(q)=-\infty$ otherwise. The function $\phi(q)=\sup\limits_k\{\phi_k(q)\}$ is called the {\it mobility in codimension one} of $L$ at $x$. We note that $\phi(q)=\lim\limits_k\{\phi_k(q)\}$, since $X$ is smooth. One very useful property of $\phi(q)$ is the following.

\begin{prop}[{\cite[Lemma 4.1]{Helmke1999}}] The function
  $\phi(q)$ is a concave down function on $\mb{R}_{\geq0}$.
\end{prop}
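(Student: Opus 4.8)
The plan is to deduce concavity of $\phi$ from a superadditivity property of the rescaled functions $k\mapsto k\phi_k$, produced by multiplying sections, and then to upgrade the resulting inequality for rational convex combinations to arbitrary real ones by means of a monotonicity observation.

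\textbf{Superadditivity.} Fix $q_1,q_2\ge 0$ and positive integers $k_1,k_2$, and set $k=k_1+k_2$ and $q=\frac{k_1q_1+k_2q_2}{k}$, so that $kq=k_1q_1+k_2q_2$. If $s_i\in H^0(X,k_iL\otimes\mathfrak{m}_x^{\ceil{k_iq_i}})$ for $i=1,2$, then $s_1s_2\in H^0(X,kL\otimes\mathfrak{m}_x^{\ceil{k_1q_1}+\ceil{k_2q_2}})$, and since $\ceil{k_1q_1}+\ceil{k_2q_2}\ge\ceil{k_1q_1+k_2q_2}=\ceil{kq}$, the product already lies in $H^0(X,kL\otimes\mathfrak{m}_x^{\ceil{kq}})$. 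Thus the sub-linear-series $V\subseteq|kL\otimes\mathfrak{m}_x^{\ceil{kq}}|$ spanned by all such products has fixed divisor equal to $F_{k_1}(q_1)+F_{k_2}(q_2)$ — the fixed divisor of a span of products is computed component by component as a minimum, which separates into the two factors — and since passing from the full series to $V$ can only enlarge the fixed part, we get $\ord_xF_k(q)\le\ord_xF_{k_1}(q_1)+\ord_xF_{k_2}(q_2)$. Rewriting this in terms of $\phi$ (using $kq=k_1q_1+k_2q_2$) gives
\[k\,\phi_k(q)\ \geq\ k_1\,\phi_{k_1}(q_1)+k_2\,\phi_{k_2}(q_2),\]
with the convention that an empty linear series contributes $-\infty$, so that the inequality is then vacuous. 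Taking $q_1=q_2=q$ shows that $k\mapsto k\phi_k(q)$ is superadditive; Fekete's lemma then recovers the limit formula $\phi(q)=\lim_k\phi_k(q)=\sup_k\phi_k(q)$ quoted just above.

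\textbf{From the inequality to concavity.} Let $\lambda\in(0,1)$ and $q_1,q_2\ge0$; if $\lambda\in\{0,1\}$ or one of $\phi(q_1),\phi(q_2)$ is $-\infty$ the desired inequality is immediate, so assume $\phi(q_1),\phi(q_2)$ finite. When $\lambda=\frac{a}{a+b}$ is rational, apply the displayed inequality with $k_1=aN$ and $k_2=bN$ (the resulting weighted mean is then exactly $\lambda q_1+(1-\lambda)q_2$), divide by $(a+b)N$, bound $\phi_{(a+b)N}(\lambda q_1+(1-\lambda)q_2)\le\phi(\lambda q_1+(1-\lambda)q_2)$, and let $N\to\infty$; using $\phi_{aN}(q_1)\to\phi(q_1)$ and $\phi_{bN}(q_2)\to\phi(q_2)$ we obtain $\phi(\lambda q_1+(1-\lambda)q_2)\ge\lambda\phi(q_1)+(1-\lambda)\phi(q_2)$. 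For arbitrary real $\lambda$ I would first record that $q\mapsto\phi(q)-q$ is non-increasing: for each $k$, increasing $q$ enlarges $\ceil{kq}$, hence shrinks the series $|kL\otimes\mathfrak{m}_x^{\ceil{kq}}|$ and enlarges its fixed part, so $\ord_xF_k(q)$ is non-decreasing in $q$, and $\phi(q)-q=-\inf_k\big(\ord_xF_k(q)/k\big)$ is an infimum of non-decreasing functions. In particular $\phi$ is upper semicontinuous from the right — indeed $\phi(q')\le\phi(q)+(q'-q)$ for all $q'\ge q$ — and it is finite at every interior point of a segment at whose endpoints it is finite. Now given real $\lambda\in(0,1)$ with, say, $q_1<q_2$, choose rationals $\lambda_j\uparrow\lambda$, so that $q_j:=\lambda_jq_1+(1-\lambda_j)q_2$ decreases to $q:=\lambda q_1+(1-\lambda)q_2$; rational concavity gives $\phi(q_j)\ge\lambda_j\phi(q_1)+(1-\lambda_j)\phi(q_2)$, and letting $j\to\infty$ while invoking right upper semicontinuity of $\phi$ yields $\phi(q)\ge\lambda\phi(q_1)+(1-\lambda)\phi(q_2)$. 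The case $q_1>q_2$ is symmetric and $q_1=q_2$ is trivial, so $\phi$ is concave on $\mb{R}_{\ge0}$ (any jump down to $-\infty$ past the interval of finiteness is consistent with concavity).

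\textbf{Main obstacle.} The only substantive point is the multiplication step yielding superadditivity of $k\phi_k$; everything else is bookkeeping. The mild subtlety to guard against is the rational-to-real upgrade: $\phi$ is merely a supremum of functions that are piecewise linear with downward jumps, so it need not be continuous a priori, and the monotonicity of $\phi(q)-q$ is precisely the regularity that allows passage to the limit — it also conveniently forbids $\phi$ from dropping to $-\infty$ in the interior of a segment on which concavity is being tested.
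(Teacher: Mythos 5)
Your proof is correct. The paper itself offers no argument for this proposition (it is quoted from \cite{Helmke1999}), and your route --- superadditivity of $k\mapsto k\phi_k$ via multiplication of sections and comparison of fixed parts, followed by the monotonicity of $q\mapsto\phi(q)-q$ to upgrade rational convex combinations to real ones --- is the standard argument and is essentially the one in Helmke's paper.
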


The following result, the idea of its proof is due to Fujita (see \cite{Fujita1993}), has appeared in \cite{Kawamata1997}, \cite{Helmke1999} and \cite{Lee1999} in different forms.
	
\begin{prop}\label{integral}
  Let $\beta$ and $\gamma$ be two positive numbers such that $\gamma>\beta$. Assume that ${\vol(t,L)}\geq0$ for any $t$ such that $\gamma\geq t\geq\beta$.
    $$\vol(\beta, \gamma, L)\leq n\int_\beta^\gamma\phi(t)^{n-1}\d t.$$
\end{prop}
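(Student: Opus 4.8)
The plan is to relate the drop in volume, $\vol(\beta,\gamma,L)$, to the mobility function $\phi$ via a filtration argument and then recognize the right-hand side as a Riemann sum. First I would reduce to the case where $\beta$ and $\gamma$ are rational, by continuity of $\vol(t,L)$ in $t$ and of $\phi$; a genuinely rational statement is what one actually proves and the real case follows by taking limits. Fix a large integer $k$ clearing denominators and consider the chain of ideals $\mathfrak m_x^{\ceil{kt}}$ as $t$ increases from $\beta$ to $\gamma$: the associated graded pieces of the filtration of $H^0(X,kL)$ by order of vanishing at $x$ have dimensions that, after dividing by $k^{n}/n!$ and passing to the limit, reconstruct $\vol(\beta,\gamma,L)$.

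The heart of the argument is a bound, for each intermediate level $t$, on the number of sections of $kL$ vanishing to order exactly $\ceil{kt}$ at $x$ but not more. Here one uses the fixed part $F_k(t)$ of $|kL\otimes\mathfrak m_x^{\ceil{kt}}|$: the moving part is a sublinear series of $k(L)$ minus a fixed divisor, and its sections, restricted to the exceptional divisor $E$ of the blowup $\pi\colon Y'\to X$ at $x$, vanish to order governed by $k\phi_k(t)=kt-\ord_xF_k(t)$. Counting monomials of a given degree in $n$ variables on $E\cong\mathbb P^{n-1}$ then contributes a term of order $\frac{(k\phi_k(t))^{n-1}}{(n-1)!}$ at level $t$ — this is exactly where the exponent $n-1$ and the factor $n=n\cdot\frac{1}{(n-1)!}\cdot(n-1)!$ come from. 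Summing these contributions over the levels $t=\beta, \beta+\tfrac1k,\dots,\gamma$ and dividing by $k^{n}/n!$ produces
\[
\vol(\beta,\gamma,L)\le \frac{n}{k}\sum_{j}\phi_k\!\left(\beta+\tfrac{j}{k}\right)^{n-1}+o(1),
\]
and letting $k\to\infty$ turns the sum into $n\int_\beta^\gamma\phi(t)^{n-1}\,\d t$, using $\phi=\lim_k\phi_k$ and the fact established above that $\phi$ is concave (hence the convergence of the Riemann sums is uniform enough). The hypothesis $\vol(t,L)\ge 0$ on $[\beta,\gamma]$ is what guarantees the linear series in question are nonempty at every intermediate level, so that $\phi_k(t)\ne-\infty$ and the counting is valid.

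I expect the main obstacle to be the careful bookkeeping in the monomial count: one must argue that the sections newly killed when passing from vanishing order $\ceil{kt}$ to $\ceil{kt}+1$, after subtracting the fixed divisor, genuinely inject into the degree-$\ceil{k\phi_k(t)}$ piece of the homogeneous coordinate ring of $E$, and that the concavity of $\phi$ controls the error between $\frac1k\sum\phi_k(\beta+\tfrac jk)^{n-1}$ and $\int_\beta^\gamma\phi(t)^{n-1}\d t$ uniformly in $k$. The concavity (and the already-noted identity $\phi(q)=\lim_k\phi_k(q)$) is precisely the tool that makes both the interchange of limit and sum and the passage to the integral legitimate, which is why it was recorded just before this proposition. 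This is the argument of Fujita adapted as in Kawamata's and Helmke's treatments, so I would cite \cite{Fujita1993}, \cite{Kawamata1997}, \cite{Helmke1999} and keep the exposition brief.
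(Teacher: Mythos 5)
Your proposal is correct and follows essentially the same route as the paper: telescoping the filtration of $H^0(X,kL)$ by vanishing order at $x$, removing the fixed part $F_k$, bounding the image of restriction to the exceptional divisor $E\cong \mathbb{P}^{n-1}$ by $h^0(\mathcal{O}_{\mathbb{P}^{n-1}}(j-\ord_xF_k(j)))\leq (k\phi_k(t))^{n-1}/(n-1)!$, and passing to the integral via a Riemann sum. The injection into the degree-$(j-\ord_xF_k(j))$ piece that you flag as the main bookkeeping point is exactly the chain of (in)equalities the paper writes out, so no gap remains.
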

\begin{proof}
Let $k$ be a sufficiently divisible integer and $t$ be a rational number. We may assume that $kt$ is an integer, denoted by $j$. 
Let $\pi: Y\to X$ be the blow up of $X$ at $x$, $E$ be the exceptional divisor and  $\widetilde{F_{k}(j)}$ be the strict transform of the fixed part $F_{k}(j)$.  Then $\pi^*(F_k(j))=\widetilde{F_{k}(j)}+\ord_xF_k(j)E$ and we have the following commutative diagram
\begin{center}
\begin{tikzpicture}[baseline= (a).base]
\node[scale=0.75] (a) at (0,0){
	\begin{tikzcd}[column sep=small]
		& & {H^0(Y, \O_Y(\pi^*(kL)-\widetilde{F_{k}(j)}-jE))}\arrow{r}{\psi}\arrow{d}{\varphi}&  {H^0(E, \O_E(j-\ord_xF_k(j)))}\arrow{d}{\theta}\\
		0\arrow{r} & {H^0(Y, \O_Y(\pi^*(kL)-(j+1)E))}\arrow{r} & { H^0(Y, \O_Y(\pi^*(kL)-jE))}\arrow{r}{\rho}& { H^0(E, \O_E(j))}.
	\end{tikzcd}
};
\end{tikzpicture}
\end{center}
where $\psi$ and $\rho$ are restrictions, and $\varphi$ and $\theta$ are injections. Because $F_k(j)$ is the fixed part, we know that $\varphi$ is an isomorphism.

Then
\[\begin{split}
&h^0(X, \O_X(kL)\otimes \mathfrak{m}_x^{j})-h^0(X, \O_X(kL)\otimes \mathfrak{m}_x^{j+1})\\
=& h^0(Y, \O_Y(\pi^*(kL)-jE))-h^0(Y, \O_Y(\pi^*(kL)-(j+1)E))\\
= &\dim \im(\rho)=\dim \im(\rho\circ\varphi) \leq  \dim \im(\theta)
= h^0(E, \mc{O}_{E}(j-\ord_xF_k(j)))\\
\leq & \dfrac{(j-\ord_xF_k(j))^{n-1}}{(n-1)!}.
\end{split}
\]
Therefore
$$\vol(\beta,\gamma, L) \leq  \displaystyle\lim_{k\to\infty}\dfrac{\frac{\displaystyle\sum_{j=\ceil{k\beta}+1}^{\ceil{k\gamma}}(j-\ord_xF_k(j))^{n-1}}{(n-1)!}}{\frac{k^n}{n!}}=n\int_\beta^\gamma \phi(t)^{n-1}\d t.$$
\end{proof}

The idea behind the proofs of the following two propositions comes essentially from Helmke \cite{Helmke1999}.

\begin{prop}\label{optimizingboundfordivisor}
Let $L$ be an ample line bundle on $X$ with $L^n>\sigma^n\geq n^n$ and $x\in X$. Assume that for some rational number $q>\sigma$ the linear system $|kL\otimes \mathfrak{m}_x^{kq}|$ is nonempty for a sufficiently large $k$. 
There exists an effective $\mathbb{Q}$-divisor $G$ linearly equivalent to $\lambda L$ for some positive $\lambda<1$ such that it is critical at $x$ with $\ord_x G>\lambda\sigma$.
Furthermore, if the critical variety $Z=Z(G)$ is a divisor in $X$ with $\mult_xZ=m$. Then 
\[\frac{\df_x (G)}{1-\lambda} <\frac{n-m}{1-\lambda}-\frac{\lambda}{1-\lambda}\phi_k(q).\] 
In particular, if $\phi(q)>(n-m)-\mu(q-\sigma)$ for some positive number $\mu$, then 
\[\frac{\df_x(G)}{1-\lambda}<\frac{n-m+\mu\sigma}{1+\mu}.\]
\end{prop}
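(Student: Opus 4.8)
The plan is to produce $G$ by the usual two-step recipe — pass to the log canonical threshold of a divisor supplied by the hypothesis, then tie-break, exactly as in the proof of Lemma~\ref{main-lemma} — and then, under the extra hypothesis that $Z(G)$ is a divisor, to extract the bound on $\df_x(G)$ from Proposition~\ref{prop:Ein-Helmke}~\eqref{prop:Ein-Helmke-1}. The key point is that $\phi_k(q)$ measures exactly the multiplicity of $G$ at $x$ contributed by the \emph{moving} part of $|kL\otimes\mathfrak{m}_x^{\ceil{kq}}|$, so that after splitting off the critical divisor itself one still retains at least $\lambda\phi_k(q)$ of multiplicity. Concretely I would fix $k\gg0$ with $|kL\otimes\mathfrak{m}_x^{\ceil{kq}}|\neq\emptyset$, take a general member $D_0$, and set $D=\tfrac1k D_0\sim_{\mb{Q}}L$, so $\ord_x D\ge q>\sigma\ge n$. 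Then $c:=\lct_x(X,D)\le \tfrac{n}{\ord_x D}<\tfrac n\sigma\le1$ and $(X,cD)$ is log canonical but not klt at $x$; applying tie-breaking (Remark~\ref{tie-breaking}) to $cD$ yields $G\sim_{\mb{Q}}\lambda L$ with $0<\lambda<1$, $|\lambda-c|\ll1$, $G$ critical at $x$, $\ord_x G$ as close to $c\,\ord_x D\ge cq$ as desired, and — because $q>\sigma$ — enough slack to keep $\ord_x G>\lambda\sigma$; this is the first assertion. Moreover $G$ has the shape $\tfrac{c'}{k}\bigl(F_k(q)+M_0\bigr)+\delta H$, where $c'$ is arbitrarily close to $\lambda$ with $c'<\lambda$, $M_0$ is a general member of the moving part of $|kL\otimes\mathfrak{m}_x^{\ceil{kq}}|$ (so $\ord_x M_0\ge \ceil{kq}-\ord_x F_k(q)\ge k\phi_k(q)$ and $M_0$ does not contain $Z$), $H$ is general through $x$ and may be taken not to contain $Z$, and $\delta=\lambda-c'>0$ is small.

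Now suppose $Z=Z(G)$ is a divisor with $\mult_x Z=m$. The critical component maps onto $Z$, so $Z$ is irreducible, and being a divisor which is the minimal log canonical center it occurs in $G$ with coefficient $1$ near $x$; since neither $M_0$ nor $H$ contains $Z$, the divisor $Z$ must be a component of $F_k(q)$, and equating the coefficient of $Z$ in $G=\tfrac{c'}{k}(F_k(q)+M_0)+\delta H$ forces $\mathrm{coeff}_Z(F_k(q))=k/c'$, hence $\ord_x F_k(q)\ge (k/c')\,m$. Writing $G=Z+R$ with $R\ge0$ not containing $Z$ near $x$, additivity of $\ord_x$ gives $\ord_x R=\ord_x G-m$; combining $\ord_x G\ge c'q+\delta$ with the bound on $\ord_x F_k(q)$ yields
\[\ord_x R\ge c'q-m+\delta\ge c'\bigl(q-\tfrac1k\ord_x F_k(q)\bigr)+\delta=c'\phi_k(q)+\delta,\]
and letting $c'\to\lambda$ — using that $q>\sigma$, together with any extra multiplicity from components of $F_k(q)$ other than $Z$ through $x$, absorbs the leftover error — gives $\ord_x R\ge\lambda\phi_k(q)$, in fact strictly. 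Therefore Proposition~\ref{prop:Ein-Helmke}~\eqref{prop:Ein-Helmke-1} gives
\[\df_x(G)\le n-\ord_x G=n-m-\ord_x R<n-m-\lambda\phi_k(q),\]
which is the displayed inequality after dividing by $1-\lambda$.

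For the ``in particular'': if $\phi(q)>(n-m)-\mu(q-\sigma)$, pick $k$ with $\phi_k(q)>(n-m)-\mu(q-\sigma)$, so the inequality just proved gives $\df_x(G)<(n-m)(1-\lambda)+\lambda\mu(q-\sigma)$, while Proposition~\ref{prop:Ein-Helmke}~\eqref{prop:Ein-Helmke-1} together with $\ord_x G\ge\lambda q-\varepsilon$ (with $\varepsilon>0$ arbitrarily small, from the construction) gives $\df_x(G)\le n-\lambda q+\varepsilon$. After dividing by $1-\lambda$ the first bound is increasing in $\lambda$ and the second decreasing (because $q>n$), so for whatever $\lambda$ the construction produced, $\tfrac{\df_x G}{1-\lambda}$ is at most their common value at $\lambda^{\ast}=\tfrac{m}{q-n+m+\mu(q-\sigma)}$; substituting back gives
\[\frac{\df_x G}{1-\lambda}\le\frac{(n-m)(q-n)+n\mu(q-\sigma)}{(q-n)+\mu(q-\sigma)}\le\frac{n-m+\mu\sigma}{1+\mu},\]
the last step being elementary, since the difference of the two sides equals a nonpositive multiple of $n-\sigma$ and $\sigma\ge n$.

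The main obstacle is the middle step: one must argue that a divisorial critical variety $Z$ is forced inside the \emph{fixed} part $F_k(q)$ and then convert the normalization $\mathrm{coeff}_Z(G)=1$ into the sharp lower bound $\ord_x R\ge\lambda\phi_k(q)$ for the residual divisor, all the while carefully absorbing the small but genuinely present perturbations coming from replacing $D$ by $cD$ and from tie-breaking — and the strict inequality $q>\sigma$ is precisely the slack that makes this bookkeeping go through.
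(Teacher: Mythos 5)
Your construction of $G$ (lct of a general member of $|kL\otimes\mathfrak{m}_x^{\ceil{kq}}|$ followed by tie-breaking) matches the paper's, and your final bounds agree, but the two middle steps are done by genuinely different routes. For the divisor case the paper never decomposes $G=Z+R$: it writes $G$ as (moving part) $+$ (fixed part plus tie-breaking divisor), applies Proposition~\ref{prop:Ein-Helmke}~\eqref{part-3} to peel off the moving part (which does not contain $Z$ and has $\ord_x\geq\lambda\phi_k(q)$), and then invokes the bound $\df_x^Z\leq n-m$ for a hypersurface critical variety. You instead use only the elementary bound $\df_x(G)\leq n-\ord_xG$ of Proposition~\ref{prop:Ein-Helmke}~\eqref{prop:Ein-Helmke-1}, converting the normalization $\mathrm{coeff}_Z(G)=1$ into $\tfrac{c'}{k}\ord_xF_k(q)\geq m$ and hence $\ord_xR\gtrsim\lambda\phi_k(q)$; this avoids the relative-deficit machinery entirely and is arguably more transparent, at the cost of having to argue that $Z$ sits in the fixed part with the right coefficient. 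For the ``in particular'' the paper substitutes the second deficit bound $\df_x(G)\leq\sigma-\lambda q$ directly into the first and solves $p<(n-m)+\mu(\sigma-p)$; you optimize the two bounds over $\lambda$ at their crossing point and then verify an elementary inequality using $\sigma\geq n$. Both are correct, and your computation of the crossing value and of the sign of the difference checks out.

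One small caution on your bookkeeping: from $\ord_xR\geq c'\phi_k(q)+\delta$ with $\delta=\lambda-c'$ you get $\ord_xR-\lambda\phi_k(q)=\delta\bigl(1-\phi_k(q)\bigr)$, which is only positive when $\phi_k(q)<1$; so the claim that you reach $\ord_xR\geq\lambda\phi_k(q)$ ``in fact strictly'' does not follow from the algebra as written. The discrepancy is a factor tending to $1$ as the tie-breaking perturbation shrinks (the paper's own passage from $(1-\epsilon)c\,\phi_k(q)$ to $\lambda\phi_k(q)$ has the same feature), and it is harmless for the ``in particular'' conclusion because the hypothesis $\phi(q)>(n-m)-\mu(q-\sigma)$ provides strict slack; but if you want the first displayed inequality exactly as stated you should either keep the coefficient $c'$ (or $(1-\epsilon)c$) in place of $\lambda$ or make the absorption argument explicit.
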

\begin{proof}
Let $D$ be a general element in $|kL\otimes \mathfrak{m}_x^{kq}|$. Let $c=\lct(X,\frac Dk)<1$ and $G'=\frac{c}{k}D$. Hence $G'$ is linearly equivalent to $cL$ with $\ord_xG'\geq cq$. If $G'$ is critical at $x$, then we let $G=G'$ and $\lambda=c$. Otherwise, we can perturb $G'$ to get the desired divisor $G$ as follows. By tie breaking, for any $0\leq\alpha\ll 1$ and some $0\leq\epsilon\ll 1$, there is an effective $\mathbb{Q}$-divisor $H$ linearly equivalent to $\alpha G$ such that  $G=(1-\epsilon)(G'+H)$ is critical at $x$. Hence $G$ is linearly equivalent to $\lambda L$, where $\lambda=(1-\epsilon)(1+\alpha)c$. When $\alpha$ is sufficiently small, we have $\lambda<1$ and $\ord_xG\geq(1-\epsilon)\ord_xG'\geq (1-\epsilon)cq>\lambda\sigma$.

Now assume that the critical variety $Z$ is a divisor. We write $$G=(1-\epsilon)(G'-\frac{c}{k}F_k(q))+((1-\epsilon)(\frac{c}{k}F_k(q)+H)).$$
Since the critical variety $Z$ is a divisor, we know that $G-Z$ is effective. By the choice (generality) of $G'$, the divisor $(1-\varepsilon)(\frac{c}{k} F_k(q)+H)-Z$ is also effective. Hence \[\df_x^Z((1-\varepsilon)(\frac{c}{k}F_k(q)+H))\leq \df_x^Z(Z).\]
By Proposition \ref{prop:Ein-Helmke} (3),  we have
\begin{equation}\label{eq:def-optimized}\begin{split}
\df_x(G)&{=\df_x^Z(G)}\\
&\leq \df_x^Z( (1-\epsilon)(\frac{c}{k}F_k(q)+ H))-\ord_x((1-\epsilon)(G'-\frac{c}{k}F_k(q)))\\
&\leq \df_x^Z(Z)-(1-\epsilon)\ord_x(G'-\frac{c}{k}F_k(q))\\
&\leq \df_x^Z{(Z)}-(1-\epsilon)c\phi_k(q)\\
&\leq (n-m)-(1-\epsilon)c\phi_k(q).\end{split}\end{equation}
Recall that $\lambda=(1-\epsilon)(1+\alpha)c$, $c<1$ and $0\leq\alpha\ll 1$. Then $(1+\alpha)c<1$ and we obtain that
\begin{equation}\label{ineq-3}
\begin{split}\frac{\df_x(G)}{1-\lambda}&{\leq  \frac{(n-m)-(1-\epsilon)c\phi_k(q)}{1-\lambda} \quad\quad{\text{(by the inequality~ \eqref{eq:def-optimized})}}}\\
&{=} \frac{(n-m)}{1-\lambda}-\frac{1}{(1+\alpha)c}\frac{\lambda}{1-\lambda}\phi_k(q)\\
&<\frac{(n-m)}{1-\lambda}-\frac{\lambda}{1-\lambda}\phi_k(q).\end{split}
\end{equation}
The first inequality in Proposition \ref{optimizingboundfordivisor} is proved. 

By the hypothesis on $\phi(q)$, there exist $k$ sufficiently large such that
$\phi_k(q)>(n-m)-\mu(q-\sigma)$. We obtain
\begin{equation}\label{eq-optimizingboundfordivisor}
\df_x(G)<(1-\lambda)(n-m)+\lambda\mu(q-\sigma)
\end{equation}
{by the inequality \eqref{ineq-3}.}

Note that by Proposition \ref{prop:Ein-Helmke} (1) we have $\df_x(G)\leq n-\lambda q\leq \sigma-\lambda q$. Then \[\frac{\lambda}{1-\lambda}(q-\sigma)\leq \sigma-\frac{\df_x(G)}{1-\lambda}.\] Therefore, from the inequality \eqref{eq-optimizingboundfordivisor}, we get
\[p=\frac{\df_x(G)}{1-\lambda}< (n-m)+\mu(\sigma-p).\] Solving this inequality for $p$, we see that
\[p< \frac{n-m+\mu\sigma}{1+\mu},\]
\end{proof}

\begin{prop}\label{Best-mu}Assume that $L^n>\sigma^n\geq n^n$. For a real number $w$  in $[0,\frac{\sigma(n-1)}{n})$, we set $\mu(w)$ to be the minimal positive number satisfying
 \begin{equation}\label{min-slope}
 \frac{(\frac{w}{\sigma}+\mu)^n}{\mu(1+\mu)^{n-1}}\leq 1.
 \end{equation}
Then there exists a rational number $q>\sigma$ such that
\begin{equation}\label{condonq}
\phi(q)> w-\mu(w)(q-\sigma)
\end{equation}
for all numbers $w\in [0,\frac{\sigma(n-1)}{n})$.
 In particular, there exists a rational number $q>\sigma$ such that  \[\phi(q)> w- \dfrac{w}{\sigma(\sigma-1-w)}(q-\sigma)\]
 for all $w\in [0,n-1)$.
 \end{prop}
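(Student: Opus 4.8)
The plan is to play the formal properties of $\phi$ against the integral inequality of Proposition~\ref{integral} by means of a tangent-line estimate for concave functions whose output is \emph{precisely} the inequality defining $\mu(w)$. Recall the standing facts about $\phi=\phi(q)$: it is concave on $\mathbb{R}_{\ge 0}$; $\phi(0)=0$, because $|kL|$ is base-point free for $k\gg 0$; and $\phi(q)\le q$, because the fixed part of $|kL\otimes\mathfrak{m}_x^{\lceil kq\rceil}|$ has nonnegative order at $x$. Choosing $\gamma_0>\sigma$ with $\vol(\gamma_0,L)=0$ (e.g. $\gamma_0=\sqrt[n]{L^n}$), we get $\vol(0,\gamma_0,L)=\vol(0,L)=L^n>\sigma^n$, so Proposition~\ref{integral} yields $n\int_0^{\gamma_0}\phi(t)^{n-1}\,\d t>\sigma^n$ and hence $n\int_0^{\infty}\max\{\phi(t),0\}^{n-1}\,\d t>\sigma^n$.

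Next I would prove a tangent-line bound: if $q>0$ is a point where the right derivative satisfies $\phi'_+(q)<0$, set $\mu:=-\phi'_+(q)>0$ and $t^{\ast}:=\tfrac{\phi(q)+\mu q}{1+\mu}$; assuming $t^{\ast}\le\sigma$, bound $\phi$ above by $t$ on $[0,t^{\ast}]$ and by its tangent $\phi(q)+\phi'_+(q)(t-q)$ on $[t^{\ast},\infty)$ (the latter by concavity), integrate, and use $t^{\ast}=\phi(q)+\phi'_+(q)(t^{\ast}-q)$ to obtain
\[ n\int_0^{\infty}\max\{\phi(t),0\}^{n-1}\,\d t\ \le\ (t^{\ast})^{n}\Bigl(1+\tfrac1\mu\Bigr)\ =\ \frac{(\phi(q)+\mu q)^{n}}{\mu(1+\mu)^{n-1}}. \]
Applying this at $q=\sigma$ (legitimate since $t^{\ast}\le\sigma$ is equivalent to $\phi(\sigma)\le\sigma$) and comparing with the volume inequality forces $\tfrac{(\phi(\sigma)/\sigma+\mu)^{n}}{\mu(1+\mu)^{n-1}}>1$. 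A short calculus check shows that for $0\le w<(n-1)\sigma/n$ the map $\mu\mapsto\tfrac{(w/\sigma+\mu)^{n}}{\mu(1+\mu)^{n-1}}$ decreases strictly to a unique minimum of value $<1$ and then increases back to the limit $1$, so it is $\le1$ exactly on $[\mu(w),\infty)$; with $w=\phi(\sigma)<n-1\le(n-1)\sigma/n$ this converts the displayed strict inequality into the slope bound $-\phi'_+(\sigma)<\mu(\phi(\sigma))$ — the reason the function $\mu(w)$ was defined by that equation.

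To conclude: if $\phi(q_0)\ge n-1$ for some $q_0>\sigma$ we are done, since then $w-\mu(w)(q_0-\sigma)\le w<n-1\le\phi(q_0)$ for all $w\in[0,n-1)$; so assume $\phi<n-1$ on $(\sigma,\infty)$. Then $\phi(\sigma)\le n-1$, and — after moving the base point slightly past the maximum of $\phi$ if $\phi$ is nondecreasing at $\sigma$, and taking a limit if $\phi(\sigma)=n-1$ — the estimate gives $\phi'_+(\sigma)>-\mu(\phi(\sigma))\ge-\mu(n-1)$, using that $w\mapsto\mu(w)$ is increasing. Thus just to the right of $\sigma$ the graph of $\phi$ falls strictly more slowly than the line $\ell_{n-1}(t):=(n-1)-\mu(n-1)(t-\sigma)$, while $\phi(\sigma)\le n-1=\ell_{n-1}(\sigma)$; since for $q$ near $\sigma$ the envelope $\Psi(q):=\sup_{0\le w<n-1}(w-\mu(w)(q-\sigma))$ equals $\ell_{n-1}(q)$ with supremum unattained, one pins down a rational $q>\sigma$ with $\phi(q)\ge\Psi(q)$, i.e. $\phi(q)>w-\mu(w)(q-\sigma)$ for every $w\in[0,n-1)$. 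The ``in particular'' clause then follows once $\mu(w)\le\tfrac{w}{\sigma(\sigma-1-w)}=:\mu_0$; substituting $\mu=\mu_0$ into the quantity above reduces this to $w^{n-1}(\sigma-w)\le(\sigma-1)^{n-1}$, which holds because $w^{n-1}(\sigma-w)\le(n-1)^{n-1}(\sigma-n+1)$ on $[0,n-1]\subseteq[0,(n-1)\sigma/n]$ and $(\sigma-1)^{n-1}-(n-1)^{n-1}(\sigma-n+1)$ vanishes with zero first derivative at $\sigma=n$ and is nondecreasing for $\sigma\ge n$; finally $\mu_0\ge\mu(w)$ gives $\phi(q)>w-\mu(w)(q-\sigma)\ge w-\mu_0(q-\sigma)$.

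The step I expect to be hardest is this last construction — getting one $q$ that beats \emph{all} the lines $\ell_w$ at once, rather than just $\ell_{\phi(\sigma)}$. Concretely one must exhibit $t>\sigma$ at which $\phi$ has already climbed above $\Psi$ and is still positive; since $\phi$ is only bounded below by the chord from $(\sigma,\phi(\sigma))$ to its next zero, this forces one to argue that the slope bound $\phi'_+(\sigma)>-\mu(n-1)$ together with the global volume inequality make $\phi$ overtake $\ell_{n-1}$ before it dies, and it is exactly the defining equation for $\mu(n-1)$ that reconciles these requirements. The degenerate configurations noted above ($\phi$ nondecreasing at $\sigma$; $\phi(\sigma)=n-1$; and $\sigma=n$, where $\tfrac{((n-1)/\sigma+\mu)^{n}}{\mu(1+\mu)^{n-1}}$ touches $1$ only at its minimum) are routine but need separate handling.
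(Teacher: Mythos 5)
Your proposal contains the right analytic ingredients --- the integral bound of Proposition~\ref{integral}, the computation $\int_0^{t^*}nt^{n-1}\,\d t+\int_{t^*}^{b}n\,l(t)^{n-1}\d t=(t^*)^n\bigl(1+\tfrac1\mu\bigr)=\tfrac{(w+\mu\sigma)^n}{\mu(1+\mu)^{n-1}}$ which is exactly the identity the paper uses, and a correct verification of the ``in particular'' clause --- but the logical assembly has a genuine gap at precisely the step you flag as hardest, and that step is the whole content of the proposition. Your tangent-line estimate at $q=\sigma$ yields only local information: the value $\phi(\sigma)$ and the bound $-\phi'_+(\sigma)<\mu(\phi(\sigma))$. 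This lets $\phi$ beat the \emph{single} line $l_{w_0}$ with $w_0=\phi(\sigma)$ just to the right of $\sigma$, but if $\phi(\sigma)<n-1$ then every line $l_w$ with $w_0<w<n-1$ starts strictly above $\phi$ at $\sigma$, so near $\sigma$ you have $\phi<\Psi$, not $\phi\ge\Psi$. Your claim that $\phi$ ``overtakes $\ell_{n-1}$ before it dies'' is asserted, not proved: concavity gives only an \emph{upper} bound for $\phi$ to the right of $\sigma$ (the tangent line), so nothing in your argument prevents $\phi$ from bending down and remaining below $\ell_{n-1}$ (hence below $\Psi$) all the way to its zero. The slope bound at $\sigma$ and the global volume inequality do not by themselves ``reconcile'' this; some further global argument is required, and none is supplied. (Secondary issues: when $\phi'_+(\sigma)\ge0$ your estimate must be anchored at some $q>\sigma$, where it no longer produces $-\phi'_+(q)<\mu(\phi(q))$ because the numerator becomes $(\phi(q)+\mu q)/\sigma$ rather than $\phi(q)/\sigma+\mu$; and the description of the envelope $\Psi$ near $\sigma$ as $\ell_{n-1}$ needs control of $\mu(w)$ as $w\to(n-1)^-$.)

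The paper closes exactly this gap by running the argument in the contrapositive. Suppose no $q>\sigma$ works; then $\phi\le\Psi:=\sup_w l_w$ on all of $(\sigma,\infty)$. Since $\phi$ is concave and $\Psi$ is convex with $\Psi(\sigma)=n-1$, the two graphs are separated by a single supporting line, so there is \emph{one} $w$ with $\phi(q)\le l_w(q)$ for all $q\ge\sigma$ (and, after steepening $\mu(w)$ to some $\nu$ with $\tfrac{(w/\sigma+\nu)^n}{\nu(1+\nu)^{n-1}}<1$, for all $q\ge0$). The integral estimate is then applied against \emph{that} separating line rather than against the tangent to $\phi$ at $\sigma$: it gives $\vol(0,b,L)\le\sigma^n$ with $b=\sigma+w/\nu$, hence $\vol(b,L)\ge L^n-\sigma^n>0$, hence $\phi(r)\ge0$ for some $r>b$, contradicting $\phi(r)\le l(r)<0$. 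In short, your computation is the paper's computation applied to the wrong line; the missing idea is the concave/convex separation that upgrades ``for each $q$ some $l_w$ dominates $\phi$'' to ``a single $l_w$ dominates $\phi$ everywhere,'' after which the integral inequality does produce a contradiction. As written, your argument does not prove the statement.
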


\begin{proof}
Assume contrarily that for every rational number $q>\sigma$, there exists some $w$ such that the inequality \eqref{condonq} fails. For simplicity, we write $l_w(q)=w-\mu(w)(q-\sigma)$. We claim that there is a real number $w\in[0,\frac{\sigma(n-1)}{n})$ such that $\phi(q)\leq l_{w}(q)$ for any rational number $q>0$.

We define 
\[\psi(q):=\sup\limits_{w\in[0,\frac{\sigma(n-1)}{n})}\{l_w(q)\}
\]for $q>0$. As the supreme of a family of linear functions, the function $\psi(q)$ is a concave up function of $q$ with $\psi(\sigma)=\frac{\sigma(n-1)}{n}$. We also note that $\mu(w)\to+\infty$ as $w\to \frac{\sigma(n-1)}{n}$. That implies $\psi(q)=\infty$ for $q<\sigma$. Since we assume contrarily that $\phi(q)\leq w-\mu(w)(q-\sigma)$ for every rational number $q>\sigma$, we see that $\psi(q)\geq \phi(q)$ for all $q>0$. Recall that $\phi(q)$ is concave down and $\psi(q)$ is concave up for $q\geq \sigma$. Then $\phi(q)$ and $\psi(q)$ are separated by a tangent line of $\psi(q)$ (see Figure \ref{Helmke-fig}). 
This fact proves our claim that there is a $w\in [0,\frac{\sigma(n-1)}{n})$ such that $\phi(q)\leq l_w(q)=:l(q)$ for every $q>0$.  
 
\begin{figure}[htbp]
\begin{center}
\pgfplotsset{every plot/.append style={line width=4pt}}
\begin{tikzpicture}[/pgf/declare function={
    psi=2.025+x/4-2*sqrt(3/4*(x-4));
    l=3/2-1/4*(x-4);
    diag=x;
    },
    ]
]
\begin{axis}[
  axis lines=middle,
  unit vector ratio*=1 1 1,
  xmin=-0.5,
  xmax=14,
  ymin=-0.5,
  ymax=6,
  xtick={2,4,10},
  xticklabels={$a$, $\sigma$,$b$},
  xlabel={$q$},
  ytick={2,3, 4},
  yticklabels={$a$, $\sigma-\sigma/n$, $\sigma$},
  yticklabel style = {font=\small,xshift=0.5ex},
  xticklabel style = {font=\small,yshift=0.5ex},
  xlabel style = {font=\tiny,anchor=north east}
]

\addplot[domain=1:12, samples=1000, smooth] {psi};
\node[font=\tiny] at (axis cs:7.5,1.5) {$\psi(q)$};
\addplot[domain=1:11] {l}; 
\node[font=\tiny] at (axis cs:1,2.6) {$l(q)$};
\addplot[domain=0:5, dotted] {diag}; 
\addplot[smooth] coordinates {(0,0) (0.05, 0.05) (1.25,0.95) (2.05,1.3) (2.8,1.4) (4.05,1.3) (6.1,0.925) (7.6,0.545)  (8.35,0.3) (9.2,0)};
\node[font=\tiny] at (axis cs:2.75,1) {$\phi(q)$};
\addplot[mark=none, dotted] coordinates {(4, 0) (4, 4)};
\addplot[mark=none, dotted] coordinates {(2, 0) (2, 2)};
\addplot[domain=0:4, dotted] {4}; 
\addplot[domain=0:4, dotted] {3};
\addplot[domain=0:2, dotted] {2};

\end{axis}
\end{tikzpicture}
\end{center}
\caption{Relations between $l(q)$, $\phi(q)$ and $\psi(q)$ \label{Helmke-fig}}
\end{figure}
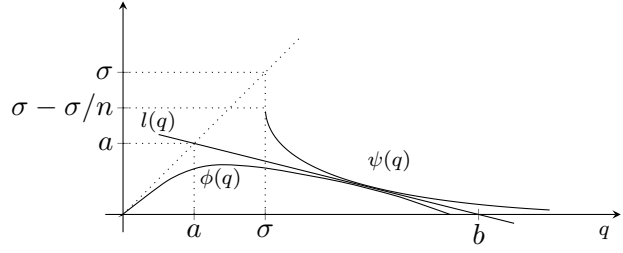

Set $\nu=\mu(w)$, $a= \frac{w+\nu\sigma}{1+\nu}\leq \sigma$ and  $b=\sigma+\frac{w}{\nu}\geq \sigma$. \[l(a)=w- \nu(\frac{w+\nu\sigma}{1+\nu}-\sigma)=\frac{w+\nu\sigma}{1+\nu}=a,\]
\[ l(b)=w-\nu(\sigma+\dfrac w{\mu}-\sigma)=0.\]
Therefore,
\[\phi(t)\leq \min\{t,l(t)\}\leq \begin{cases} t& 0\leq t\leq a\\
l(t) & a\leq t\leq b\\
0 & t\geq b.
\end{cases}\]
By Proposition \ref{integral}, we have
\begin{align*}\label{eq:vol}
    \vol(0,b,L)&\leq \int^a_0nt^{n-1}\d t
+\int^b_an\phi(t)^{n-1}\d t\\
&\leq \int^a_0nt^{n-1}\d t
+\int^b_anl(t)^{n-1}\d t\\
&=a^n-\frac{1}{\nu}[l(b)^n-l(a)^n]\\
&=a^n(1+\frac{1}{\nu})=\sigma^n\frac{(\frac{w}{\sigma}+\nu)^n}{\nu(1+\nu)^{n-1}}.\\
&\leq \sigma^n.
\end{align*}Consequently, $\vol(b,L)\geq \vol(0,L)-\sigma^n=L^n-\sigma^n>0$.  It follows that $|kL\otimes \mathfrak{m}_x^{rk}|\neq\emptyset$ for a rational number $r>b$ and $k\gg 0$ which implies that  $\phi(r)\geq 0$.   However, we have $\phi(r)\leq l(r)< 0$. This is a contradiction.

For the last assertion, it is enough to check that for any $w\in [0,n-1)$,  $\frac{w}{\sigma(\sigma-1-w)}$ satisfying inequality   \eqref{min-slope}. Hence $\frac{w}{\sigma(\sigma-1-w)}\geq \mu(w)$ and we obtain our assertion. 
\end{proof}

\begin{cor}\label{upperbound-p}
Under the assumptions in Proposition \ref{optimizingboundfordivisor}, we have
\[\frac{\df_x(G)}{1-\lambda}<\frac{\sigma(n-m)}{\sigma-1}.\]
\end{cor}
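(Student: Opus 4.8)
The plan is to combine Proposition~\ref{Best-mu}, which produces a number $q>\sigma$ together with a sharp lower bound on the mobility $\phi(q)$, with the last assertion of Proposition~\ref{optimizingboundfordivisor}, which converts such a bound into a bound on $\frac{\df_x(G)}{1-\lambda}$; after that only an elementary simplification remains.

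First I would apply the ``in particular'' part of Proposition~\ref{Best-mu} to obtain a rational $q>\sigma$ with
\[
\phi(q) > w - \frac{w}{\sigma(\sigma-1-w)}(q-\sigma) \qquad\text{for every } w\in[0,n-1).
\]
Since $\phi(q)$ is finite, the linear system $|kL\otimes\mathfrak{m}_x^{kq}|$ is nonempty for $k\gg 0$, so one may run the construction in Proposition~\ref{optimizingboundfordivisor} with this $q$ and obtain the critical divisor $G\sim\lambda L$ with $\ord_x G>\lambda\sigma$, whose critical variety $Z=Z(G)$ is a divisor with $\mult_x Z=m$. Because $Z$ is a hypersurface, Theorem~\ref{Thm:boundofmult} gives $m\le n-1$, hence $w:=n-m\ge 1$; assuming $m\ge 2$ (the smooth case $m=1$ being treated separately, e.g.\ via Proposition~\ref{smoothdivisorreduction}) we have $w\in[1,n-2]\subset[0,n-1)$ and $\sigma-1-w=\sigma-n+(m-1)\ge 1>0$ because $\sigma\ge n$. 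Thus $\mu:=\frac{n-m}{\sigma(\sigma-1-(n-m))}$ is a positive real number, and the displayed inequality specialised to $w=n-m$ reads exactly $\phi(q)>(n-m)-\mu(q-\sigma)$.

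I would then invoke the final assertion of Proposition~\ref{optimizingboundfordivisor} with this particular $\mu$, which yields
\[
\frac{\df_x(G)}{1-\lambda} < \frac{(n-m)+\mu\sigma}{1+\mu},
\]
and conclude by evaluating the right-hand side for the chosen $\mu$. Writing $w=n-m$, one has $\mu\sigma=\frac{w}{\sigma-1-w}$, so the numerator is $\frac{w(\sigma-w)}{\sigma-1-w}$ and the denominator is $\frac{\sigma(\sigma-1-w)+w}{\sigma(\sigma-1-w)}=\frac{(\sigma-1)(\sigma-w)}{\sigma(\sigma-1-w)}$, using the identity $\sigma^2-\sigma-\sigma w+w=(\sigma-1)(\sigma-w)$; after cancellation the bound collapses to $\frac{w\sigma}{\sigma-1}=\frac{\sigma(n-m)}{\sigma-1}$, which is the assertion. (Here I read the $1-c$ in the statement as $1-\lambda$ in the notation of Proposition~\ref{optimizingboundfordivisor}.)

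The only delicate point is the choice of $\mu$: one must plug into Proposition~\ref{optimizingboundfordivisor} precisely the slope $\frac{w}{\sigma(\sigma-1-w)}$ that Proposition~\ref{Best-mu} has already certified to satisfy inequality~\eqref{min-slope}, so that no additional optimisation over $\mu$ is necessary, and one must check that $\sigma-1-(n-m)>0$ so this $\mu$ is well defined and positive — both are immediate from $m\ge 2$ and $\sigma\ge n$. Beyond this bookkeeping and the routine algebraic identity above, there is no genuine obstacle.
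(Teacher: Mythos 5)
Your proposal is correct and is essentially the proof the paper intends (the corollary is stated without an explicit proof): one specializes Proposition~\ref{Best-mu} to $w=n-m$, feeds the resulting slope $\mu=\frac{n-m}{\sigma(\sigma-1-(n-m))}$ into the last assertion of Proposition~\ref{optimizingboundfordivisor}, and simplifies $\frac{(n-m)+\mu\sigma}{1+\mu}$ to $\frac{\sigma(n-m)}{\sigma-1}$ exactly via the identity $\sigma(\sigma-1-w)+w=(\sigma-1)(\sigma-w)$. Your caveat that $w=n-m$ must lie in $[0,n-1)$ (i.e.\ $m\geq 2$, so that $\mu$ is well defined and positive) is well taken and consistent with how the paper actually uses the corollary, namely only in the sub-case $m\geq 2$ of the proof of Theorem~\ref{Thm-dim-5}.
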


\section{Global Generation of Adjoint Line Bundles \label{Sec-proof}}
By Proposition \ref{Prop:Helmke'sInductionCriterion}, in order to get smaller lower bounds on $L^d\cdot Z$, we want smaller upper bounds for $(\frac{\df_x(G)}{1-\lambda})^{n-1}\cdot \mult_x Z$. By Lemma \ref{main-lemma}, we see that the integer $\alpha_{d, e}(m)$ is a key factor.  In lower dimensional cases, one can easily find $\alpha_{d, e}(m)$ by solving the equation in Definition \ref{Def:alpha}. 

\begin{ex}\label{lemma:dim=2}
Assume that $Z$ is the critical variety of an effective divisor $G$ at $x$ and  $\dim Z=2$. By Remark \ref{rmk:embedding-dim-surface-sing}, we know that $m=\mult_xZ=e-1$, where $e$ is the embedding dimension. Then by Theorem \ref{Thm:boundofmult}, we have the following inequality
$$m\leq \begin{pmatrix} m+1 -\alpha_{2, e}(m)\\ m-1\end{pmatrix}.$$
If $m\geq 2$, then $def_p(G)\leq \alpha_{2, e}(m)= 1$.
\end{ex}

\begin{ex}\label{lemma:dim=3}Let $G$ be an effective divisor on a smooth projective variety $X$ of dimension $5$.
Assume that $G$ is critical at $x$ and the critical variety $Z(G)$ is of dimension $3$. If $Z(G)$ is not smooth at $x$, then
\[\df_x(G)\leq\alpha_{3, 5}(m)=\begin{cases}2, ~~\mult_xZ(G)=2, 3;\\ 1, ~~\mult_xZ(G)=4, 5, 6.\end{cases}\]
\end{ex}

By calculating the integer $\alpha_{d,e}(m)$ and applying Lemma \ref{main-lemma}, 
Corollary \ref{upperbound-p} and Proposition \ref{Prop:Helmke'sInductionCriterion}, we  prove the following effective result on global generation of adjoint line bundles on 5-folds. In the proof, we will fix $d$ in each case, therefore we will use $\alpha(m)$ for $\alpha_{d,e}(m)$. We can check that $\alpha(m)\leq \alpha_{d,n}(m)$.

\begin{theorem}\label{Thm-dim-5}
Let $X$ be a smooth projective variety of dimension 5,  $L$ be an ample line bundle on $X$ and $x\in X$ be a point. Assume that $\sqrt[5]{L^5}>7$ and $\sqrt[d]{L^d\cdot Z}\geq 7$ for any subscheme $Z\subset X$ containing $x$ of dimension  $\dim Z=d$, where $d=1, 2, 3, 4$. Then $\O_X(K_X+L)$ is globally generated at $x$.
\end{theorem}

\begin{proof}Let $\sigma=7$. By Proposition \ref{Best-mu}, there exists a rational number $q>7$ such that $\phi(q)> w-\mu(w)(q-\sigma)$ for all $w\in [0, n-1)$. By Proposition \ref{optimizingboundfordivisor}, we obtain an effective $\mathbb{Q}$-divisor $G$ linearly equivalent to $\lambda L$ with $\lambda<1$ and critical at $x$ with multiplicity $\ord_xG>\lambda\sigma$. Let $Z$ be the critical variety and $m=\mult_xZ$. If $\dim Z=0$,  then the theorem follows from  Lemma \ref{cor:0-dimb-p-f}.
\begin{enumerate}
\item\label{dim=1} Assume that $\dim Z =1$. Then $Z$ is smooth and $\alpha(m)=1$. Therefore \[\frac{\df_x(G)}{1-\lambda}< \frac{\sigma}{\sigma-5+1}\leq \frac{7}{3} <7.\] Applying Proposition \ref{Prop:Helmke'sInductionCriterion} and  Lemma \ref{cor:0-dimb-p-f}, we prove the theorem.

\item\label{dim=2} Assume that $\dim Z=2$. Then $m=\mult_xZ\leq 4$. If $m\geq 2$, then $\alpha(m)=1$ and \[\frac{\df_x(G)}{1-\lambda}< \frac{\sigma}{\sigma-5+1}\leq \frac{7}{3}<\frac{7}{\sqrt{m}}.\]  If $m=1$, then $\alpha(m)\leq 2$ and \[\frac{\df_x(G)}{1-\lambda}< \frac{2\sigma}{\sigma-5+2}\leq \frac{7}{2}< 7.\]  In both cases, we can construct a new divisor $G_1$ with the following properties: $G_1$ is linearly equivalent to $\lambda_1L$ for $\lambda_1<1$;
 $G_1$ is critical at $x$ with the critical variety $Z_1$ properly contained in $Z$;
 \[\dfrac{\df_x(G_1)}{1-\lambda_1}<\begin{cases} \frac{7}{3} & m\geq 2\\ \frac{7}{2}& m=1,\end{cases}\]
by Proposition \ref{Prop:Helmke'sInductionCriterion}.
If $Z_1$ is a point, then the theorem follows directly from Lemma \ref{cor:0-dimb-p-f}. If $Z_1$ is a curve, we apply Proposition \ref{Prop:Helmke'sInductionCriterion} again and then Lemma \ref{cor:0-dimb-p-f}.

\item\label{dim=3} Assume that $\dim Z = 3$. Then $m\leq 6$. If $m\geq 2$, then $\alpha(m)\leq 2$ and \[\frac{\df_x(G)}{1-\lambda}< \frac{2\sigma}{\sigma-5+2}\leq \frac{7}{2}<\frac{7}{\sqrt[3]{m}}.\] If $m=1$, then $\alpha(m)\leq 3$ and \[\frac{\df_x(G)}{1-\lambda}< \frac{3\sigma}{\sigma-5+3}\leq \frac{21}{5}<7.\]  In both cases, we can construct a new divisor $G_1$ with the following properties: $G_1$ is linearly equivalent to $\lambda_1L$ for $\lambda_1<1$;
 $G_1$ is critical at $x$ with the critical variety $Z_1$ properly contained in $Z$; and
 \[\dfrac{\df_x(G_1)}{1-\lambda_1}<\begin{cases}\frac{7}{2} & m\geq 2\\ \frac{21}{5} & m=1\end{cases}\]
by Proposition  \ref{Prop:Helmke'sInductionCriterion}. 

Assume that $Z_1$ is a curve. By applying Proposition \ref{Prop:Helmke'sInductionCriterion} again and then Lemma \ref{cor:0-dimb-p-f}, we are done. Assume that  $Z_1$ is a point. Applying Lemma \ref{cor:0-dimb-p-f} will do the job. 

Assume $Z_1$ is a surface. Then
\[m_1=\mult_xZ_1\leq \begin{cases}4 &m\geq 2\\
2&   m=1.\end{cases}\]

Consequently, \[\dfrac{\df_x(G_1)}{1-\lambda_1}<\frac{7}{\sqrt{m}}.\]
We apply Proposition \ref{Prop:Helmke'sInductionCriterion} again to draw our conclusion.

\item Assume that $\dim Z = 4$. Then $m\leq 4$ and $\alpha(m)\leq 5-m$.
\begin{enumerate}
\item\label{smooth-case}    If $m=1$, then by Proposition \ref{smoothdivisorreduction}, we can construct a new divisor $G_1$ critical at $x$ with the critical variety $Z_1$ properly contained in $Z$ and
 \[\frac{\df_x(G_1)}{1-\lambda_1}< \frac{\sigma\alpha'}{\sigma-5+\alpha'}.\]
Since $m'=\mult_xZ_1\leq 3$, following the arguments in cases \eqref{dim=1}, \eqref{dim=2} and \eqref{dim=3}, we know that $\frac{\df_x(G_1)}{1-\lambda_1}<\frac{7}{\sqrt{m'}}$. 
Hence we can apply Proposition \ref{Prop:Helmke'sInductionCriterion} again.

\item
If $m\geq 2$, by Proposition \ref{optimizingboundfordivisor} and Corollary \ref{upperbound-p} we know that \[\frac{\df_x(G)}{1-\lambda}<\frac{3\sigma}{\sigma-1}=\frac{7}{2}< \frac{7}{\sqrt[4]{4}}.\]
By Proposition \ref{Prop:Helmke'sInductionCriterion}, we can construct a new divisor $G_1$ such that it is critical at $x$ with the critical variety $Z_1$ properly contained in $Z$ and
 \[\frac{\df_x(G_1)}{1-\lambda_1}< \frac{7}{2}\leq \min\{7, \frac{7}{\sqrt{4}}, \frac{7}{\sqrt[3]{6}}\}.\]
Then we can apply Proposition \ref{Prop:Helmke'sInductionCriterion} repeatedly till we get a $0$-dimensional $Z$.
\end{enumerate}

Summarizing our argument, we see that $\O_X(K_X+L)$ is globally generated at $x$.
\end{enumerate}
\end{proof}

\begin{rmk}We note that the same argument as in the proof of Theorem \ref{Thm-dim-5} will give a concise  proof of Kawamata's result (Theorem \ref{thm:Kawamata}) on $4$-folds.
\end{rmk}


\end{document}